\newtheorem{defn}{Definition}
\newtheorem{lemma}{Lemma}
\newtheorem{theorem}{Theorem}
\theoremstyle{definition}
\newcommand{\R}{\mathbb{R}}
\newcommand{\ess}[1]{\hat{#1}}
\newcommand{\escort}{\ensuremath{\phi}}
\newcommand{\escortdist}{\ensuremath{\hat{\escort}}}
\newcommand{\escortlog}{\ensuremath{\log_{\escort}}}
\newcommand{\escortlogi}{\ensuremath{\log_{\escort_i}}}
\newcommand{\escortdiv}{D^{\escort}}
\newcommand{\ecross}[2]{{#1}_i \escortlogi{#2}_i}
\newcommand{\escortexp}{\ensuremath{\exp_{\escort}}}
\newcommand{\escortexpi}{\ensuremath{\exp_{\escort_i}}}
\newcommand{\incentive}{\varphi}
\newcommand{\tsd}[1]{{#1}^{\Delta}}
\begin{document}
\title{Stability of Evolutionary Dynamics on Time Scales}
\author{Marc Harper}
\author{Dashiell Fryer}
\date{\today}

\begin{abstract}
We combine incentive, adaptive, and time-scale dynamics to study multipopulation dynamics on the simplex equipped with a large class of Riemmanian metrics, simultaneously generalizing and extending many dynamics commonly studied in dynamic game theory and evolutionary dynamics. Each population has its own geometry, method of adaptation (incentive), and time-scale (discrete, continuous, and others). Using an information-theoretic measure of distance we give a widely-applicable Lyapunov result for the dynamic. We include a wealth of examples leading up to and beyond the main results.
\end{abstract}

\maketitle

\section{Introduction}

Evolutionary dynamics now includes the study of many discrete and continuous dynamical systems such as the replicator\cite{Taylor78}, best reply  \cite{Gilboa91}, projection \cite{Nagurney96} \cite{Sandholm08} \cite{Joosten2008}, and logit dynamics \cite{Fudenberg98}, to name a few. Modified population growth dynamics incorporating a scale-invariant power law parameter, commonly used in generalized statistical physics \cite{tsallis1988possible}, have recently been applied to human populations \cite{Hernando12} and are closely related to the dynamical systems described in \cite{Harper2011}. We explore the unification of all these dynamics with the game-theoretically motivated incentive dynamic introduced in \cite{Fryer2012}, in discrete and continuous forms using time-scale calculus \cite{Bohner-Peterson-2003}, with Riemannian geometries on the simplex, specified by a geometrically motivated functional parameter called an escort \cite{Harper2011} that allows some dynamics to be realized in multiple ways and by a more arbitrary Riemannian metric in general. Moreover, we show that the incentive dynamic and the replicator dynamic are equivalent, through a mapping that yields insight into the stability of the aforementioned dynamics, and explains clearly how to separate the selection action from the underlying geometry, allowing for e.g. best-reply projection dynamics. Ultimately we define the time-scale escort incentive dynamic and time-scale metric dynamics (which in the continuous case correspond to a special case of the adaptive dynamics of \cite{Hofbauer90}), building up through a series of examples, and prove a general stability theorem for a large class of discrete and continuous dynamics. For general introductions to evolutionary dynamics see \cite{Hofbauer03} \cite{Cressman03} \cite{Hofbauer98}.

In this paper we show that the Kullback-Leibler information divergence and natural generalizations serve as Lyapunov functions in a variety of contexts. The use of the KL-divergence and similar information-theoretic quantities in evolutionary dynamics goes back at least to \cite{Bomze91} and is developed further in \cite{Weibull98}. A geometrically-motivated generalization from information geometry \cite{Amari93} was introduced in \cite{Harper2011}. To the reader familiar with information geometry, this should come as little surprise since the Shahshahani metric of evolutionary game theory can be identified with the Fisher information metric, which is in some sense a local variant of the KL-divergence. The projection dynamic can similarly be described in terms of the Euclidean geometry \cite{Lahkar08} and stability in terms of the Euclidean distance \cite{Nagurney97} which can be realized as a generalized information geometry \cite{Harper2011} through the introduction of a functional parameter called an escort. We show in this work that the geometry can simplify the stability theory of some evolutionary dynamics in addition to defining new dynamics, and is compatible with the formulation of the incentive dynamic and the time scale calculus. Finally, we introduce an information divergence for a general Riemannian metric satisfying some mild assumptions, extending the adaptive dynamics of \cite{Hofbauer90} to discrete time scales. Together, these ingredients yield a very general stability result.

\section{Incentive Dynamics}

Let us first introduce the incentive dynamics of Fryer \cite{Fryer2012}. Motivated by game-theoretic considerations, the incentive dynamics takes the form
\begin{equation}
\dot{x_i} = \incentive_i(x) - x_i \sum_{j}{\incentive_j(x)}.
\label{incentive_dynamic}
\end{equation}
Table \ref{incentives_table} lists incentive functions for many common dynamics. Fryer shows in \cite{Fryer2012} that any game dynamic can be written as an incentive dynamic and gives a stability theorem as follows. Define an \emph{incentive stable state} (ISS) to be a state $\ess{x}$ such that in a neighborhood of $\ess{x}$ the following inequality holds
\begin{equation}
 \sum_{i}{ \frac{\ess{x}_i \incentive_i(x)}{x_i}} > \sum_{i}{ \frac{x_i \incentive_i(x_i)}{x_i}}.
 \label{iss} 
\end{equation}
It is then shown that given an internal ISS, the Kullback-Liebler information divergence is a local Lyapunov function for the incentive dynamic (Equation \ref{incentive_dynamic}. The incentive $\incentive_i(x) = x_i f_i(x)$ captures the known result for the replicator dynamics \cite{Bomze91} \cite{Hofbauer98}, with the definition of ISS being exactly ESS: $\ess{x} \cdot f(x) > x \cdot f(x)$. Moreover, a short proof shows that for the best reply incentive, ISS is again ESS \cite{fryer2012kullback}.
\begin{figure}[h]
    \centering
    \begin{tabular}{|r|c|}
        \hline
        Dynamics Name & Incentive \\ \hline
        Replicator & $\incentive_i(x) = x_i \left(f_i(x) - \bar{f}(x)\right)$\\ \hline
        Best Reply & $\incentive_i(x) = BR_i(x) - x_i$ \\ \hline
        Logit & $\displaystyle{ \incentive_i(x) = \frac{\text{exp}(\eta^{-1}f_i(x))}{\sum_j{\text{exp}(\eta^{-1}f_j(x))}}}$ \\ \hline
        Projection & $\incentive_i(x) = \begin{cases}
                    & \displaystyle{f_i(x) - \frac{1}{|S(f(x), x)|}\sum_{j \in S(f(x), x)}{f_j(x)}} \quad \text{if $i \in S(f(x), x)$}\\
                    &0 \qquad \text{else}
                    \end{cases}$\\
        \hline
        \end{tabular}
    \caption{Incentives for some common dynamics, where $BR_i(x)$ is the best reply to state $x$, $S(f(x), x)$ is the set of all strategies in the support of $x$ as well as any collection of pure strategies in the complement of the support that maximize the average. Note that on the interior of the simplex the projection incentive is just $\incentive_i(x) = f_i(x) - 1/n \sum_j{f_j(x)}$. For more examples see Table 1 in \cite{Fryer2012}.}
    \label{incentives_table}
\end{figure}

\subsection{The Incentive Dynamic is the Replicator Dynamic}
Observe that we can transform any incentive dynamic into a replicator dynamic on the interior of the simplex (the behavior of incentives near the boundary of the simplex is another matter altogether and we will not consider it here). We simply solve for $g_i$ in the equation $\incentive_i(x) = x_i f_i(x)$ so that to every incentive $\incentive_i$ we define an effective fitness landscape $\frac{\incentive_i(x)}{x_i} = f_i(x)$, which is well-defined at least on interior trajectories and possibly on the boundary simplices, such as is the case for forward-invariant dynamics. (Note that some incentives take particular care on the boundary, such as the projection incentive in \cite{Sandholm08}.) The summation term $\sum_{j}{\incentive_j(x)}$ in equation (\ref{incentive_dynamic}) is the mean of the effective fitness landscape.

The ISS condition for an incentive is the same as the ESS condition for the effective fitness landscape, and this shows that the ISS stability theorem is equivalent to the analogous theorem for ESS and the replicator dynamic. This does not of course imply ESS for any fitness landscape used in the definition of an incentive (such as a best reply incentive using a landscape $f$). Typical fitness landscapes in evolutionary dynamics are linear and given by $f(x) = Ax$ where $A$ is a game matrix. In this formulation, one will encounter a much larger class of landscapes. For the best reply dynamic, the effective landscape is $f_i(x) = BR_i(x) / x_i - 1$, and while it is clear that such a function may not be well-defined on the boundary simplex, since the dynamic is forward-invariant we will not concern ourselves at this time.

\subsection{Example: q-Replicator Incentive}
From the preceeding section it is tempting to suspect that the concepts of ISS and ESS are equivalent, but this is not the case. Consider the \emph{q-replicator incentive} $\incentive_i(x) = x_i^q f_i(x)$ for a fitness landscape $f$. Further, assume that the fitness function is of the form $f(x) = Ax$ where $A$ is the rock-scissors-paper matrix:
\[ f(x) = \left( \begin{matrix}
          0 & -b & a\\
          a & 0 & -b \\
          -b & a & 0
          \end{matrix} \right) x \]
Several curves for various values of $q$ are plotted in Figure \ref{fig_1}. For the replicator incentive ($q=1$) the trajectory converges to an interior ESS; for other values of $q$, the trajectories may either converge or diverge. This shows that an ESS for the fitness landscape need not be an ISS for the incentive. Note also that whether a curve converges or not depends on the initial point. While for $q=1$ the Lyapunov function is global, this is not the case for other values of $q$. See Figure \ref{fig_2}. Figure \ref{fig_3} shows that an ISS need not be an ESS, and Figure \ref{fig_4} shows that an ESS need not be an ISS.

\begin{figure}[h]
        \begin{subfigure}[b]{0.5\textwidth}
                \centering
                \includegraphics[width=\textwidth]{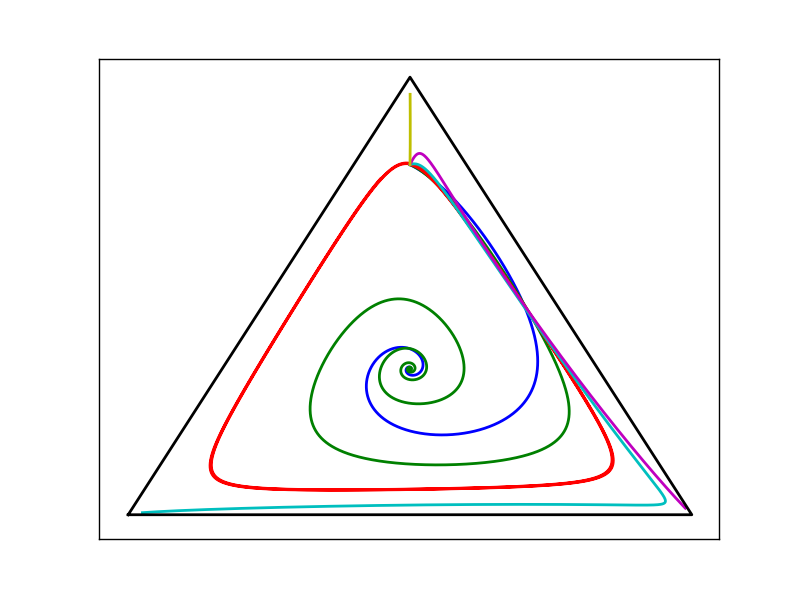}
                \caption{Initial point: $(1/10, 1/10, 8/10)$}
                \label{fig_1_1}
        \end{subfigure}%
        ~ 
        \begin{subfigure}[b]{0.5\textwidth}
                \centering
                \includegraphics[width=\textwidth]{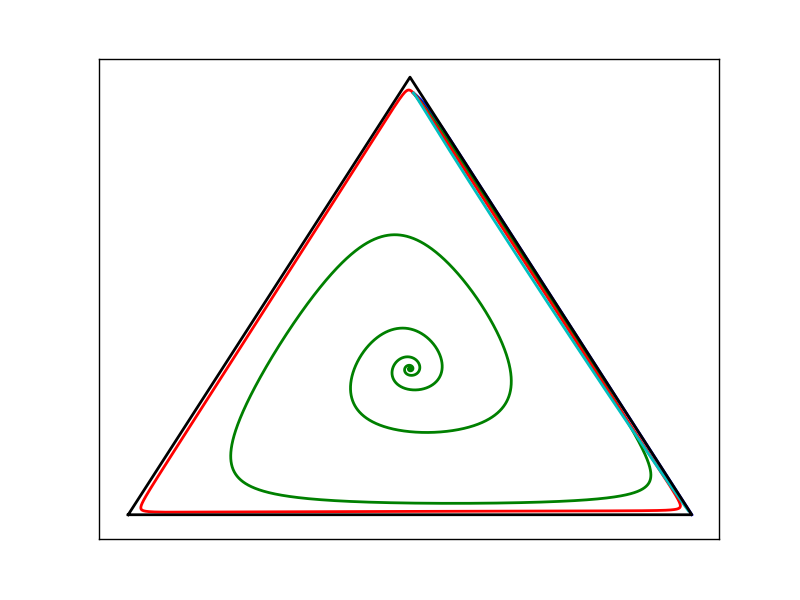}
                \caption{Initial point: $(1/83, 2/83, 80/83)$}
                \label{fig_1_2}
        \end{subfigure}
        \caption{Phase portraits for q-Replicator incentive for the powers $q \in \{0.5, 1, 1.5, 2, 2.5, 4 \}$ with colors blue, green, red, cyan, magenta, and yellow, respectively. The game matrix is the the RSP matrix with a=-1, b=-2. An ESS need not be an ISS since some of these trajectories converge and some do not, depending on the incentive.}
        \label{fig_1}
\end{figure}          

\begin{figure}[h]
        \begin{subfigure}[b]{0.5\textwidth}
                \centering
                \includegraphics[width=\textwidth]{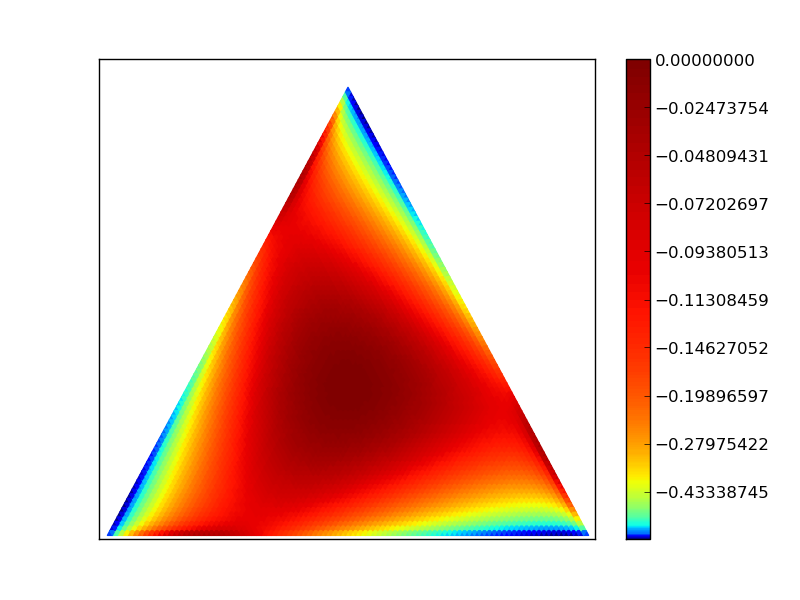}
                \caption{$q=0.78$}
                \label{fig_2_1}
        \end{subfigure}%
        ~ 
        \begin{subfigure}[b]{0.5\textwidth}
                \centering
                \includegraphics[width=\textwidth]{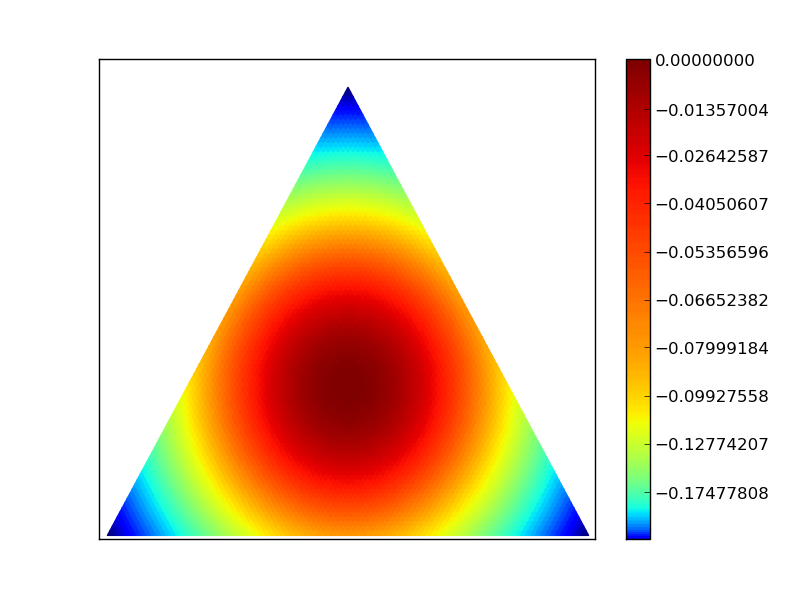}
                \caption{$q=1$}
                \label{fig_2_2}
        \end{subfigure}
        \begin{subfigure}[b]{0.5\textwidth}
                \centering
                \includegraphics[width=\textwidth]{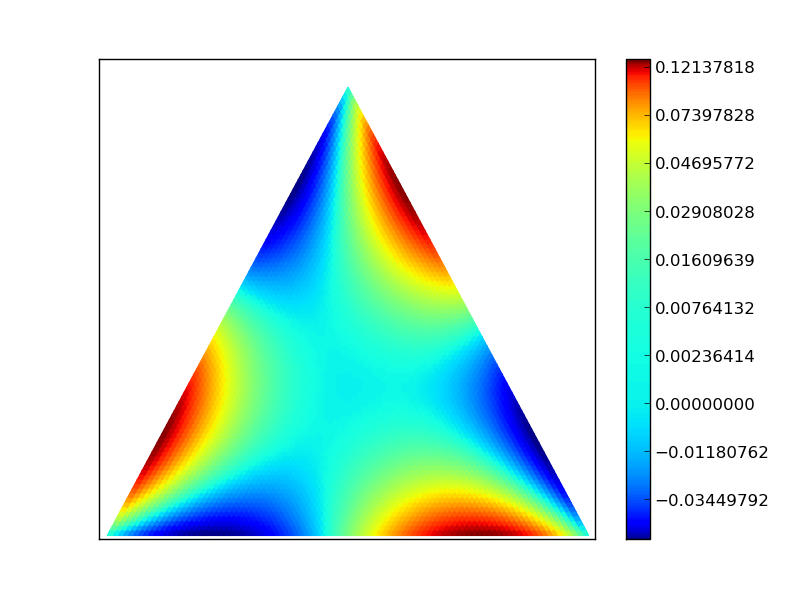}
                \caption{$q=2.5$}
                \label{fig_2_3}
        \end{subfigure}
        \caption{ISS: LHS - RHS. Stability need not be global for an ISS. Note that for $q=2.5$, the quantity changes sign depending on point. This is true for all $q > 1$.}
        \label{fig_2}
\end{figure}     

\begin{figure}[h]
    \includegraphics[width=0.5\textwidth]{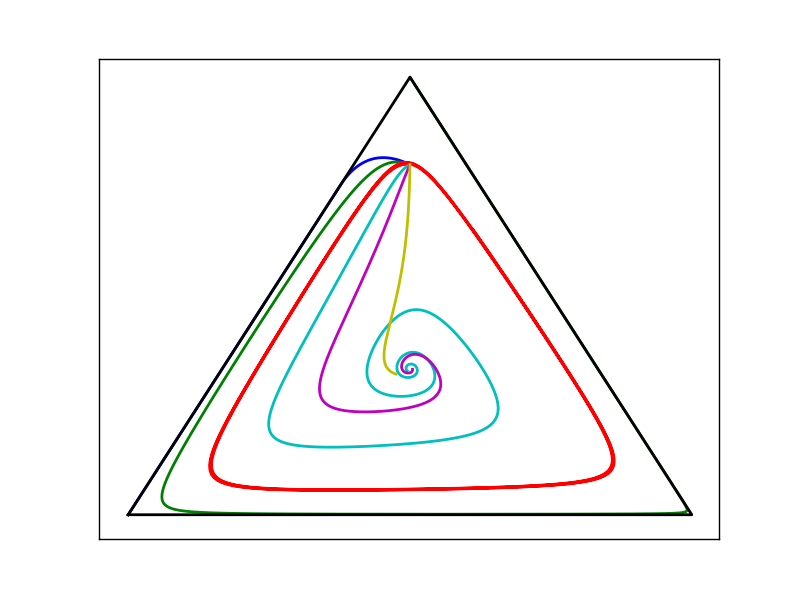}
    \caption{Phase portraits for q-Replicator incentive for the powers $q \in \{0.5, 1, 1.5, 2, 2.5, 4 \}$ with colors blue, green, red, cyan, magenta, and yellow, respectively. The game matrix is the the RSP matrix with a=1, b=2. Initial point: $(1/8, 1/8, 6/8)$. The standard replicator dynamic diverges for this landscape, so an ISS need not be an ESS.}
    \label{fig_3}
\end{figure}

\section{Time Scale Incentive Dynamics}

\subsection{Time-Scale Definitions}
To study dynamics at different time scales, a key ingredient is the time scale derivative, also known as the \emph{delta derivative}. General references for time scale dynamics include \cite{Bohner-Peterson-2001}, \cite{Bohner-Peterson-2003}. For time scales $\mathbb{T} = h \mathbb{Z}$, the time scale derivative is given by:
\[ f^{\Delta}= \frac{f(x+h) - f(x)}{h},\] which is better known as the difference quotient. For the time scale $\mathbb{T} = \mathbb{R}$, the time scale derivative is the standard derivative. While there are other common time scales, this paper restricts attention to $\mathbb{R}$ and $h\mathbb{Z}$ for $0 < h \leq 1$ for the most part.


\subsection{Time Scale Replicator Equation}

Before we proceed, let us consider an illustrative example. Define the time scale replicator equation as:
\[ x_{i}^{\Delta} = \frac{x_i \left(f_i(x) - \bar{f}(x) \right)}{\bar{f}(x)}. \]
Note that $\sum_{i}{x_{i}^{\Delta}} = 0$. For $h=1$, this is the discrete replicator dynamic, where
\[ x_{i}^{\Delta} = \frac{x(t+h) - x(t)}{h} := x'_i - x_i, \]
which can be easily seen to be equivalent to the usual discrete dynamic
\[ x'_i = \frac{x_i f_i(x)}{\bar{f}(x)}.\]
For the case $h \to 0$, we have the following:
\[ \dot{x}_{i} = \frac{x_i \left(f_i(x) - \bar{f}(x) \right)}{\bar{f}(x)}. \]
This equation is trajectory equivalent to the replicator equation (up to a change in velocity and transformation of $f(x)$ so that $\bar{f}(x) > 0$ \cite{Hofbauer98} \cite{Cressman03}). The definitions of the time-scale calculus can unify the description of continuous and discrete dynamics in evolutionary dynamics, as well as setting the stage for dynamics on other time scales, through the delta derivative.

\subsection{Time Scale Best Reply and Fictitious Play Dynamic}

In \cite{Fryer2012}, a best reply dynamic is shown to result from the incentive $\incentive_i(x) = BR_i(x) - x_i$, and the ISS condition is shown to reduce to the ESS condition. Now consider the case of $\mathbb{T} = h\mathbb{Z}$ for $h \in (0,1)$. This time scale yields a discrete best reply that is algebraically equivalent to
\[ x'_i = (1-h)x_i + h BR_i(x).\]
Here we see that the time-scale appears as a weighting between the best reply and the current state of the dynamic. In other words, $h$ is the proportion of the population that adopts the best reply. If $h=1$, then entire population switches to the best reply, and the dynamic cycles through the corner points of the simplex for the landscape defined by an RSP matrix as above.

The center of the simplex is not an equilibrium point for the time-scale $\mathbb{T} = h \mathbb{Z}$. To see this, suppose the fitness landscape $f$ has an interior ESS at the barycenter (say for an RSP matrix). On the interior of the simplex, the dynamic is stationary if and only if $x_{k+1} = x_{k}$ for some $k$, which implies that either $h=0$ or $x_i \in \{0, 1\}$ for all $i$, which is impossible for an interior ESS. However, if the time-step $h$ is not fixed, this may not be the case (see \ref{fig_5}). On a variable time scale the dynamic is (in vector form)
\[ x_{k+1} = (1-h_k)x_{k} + h_k BR(x_{k}),\]
which is similar to the fictitious play dynamic in the case that the time-scale weights decrease over time. Eventually such a time scale has $h_k  \to 0$, so the above discussion does not apply, and this dynamic can converge.

\begin{figure}[h]
    \includegraphics[width=0.5\textwidth]{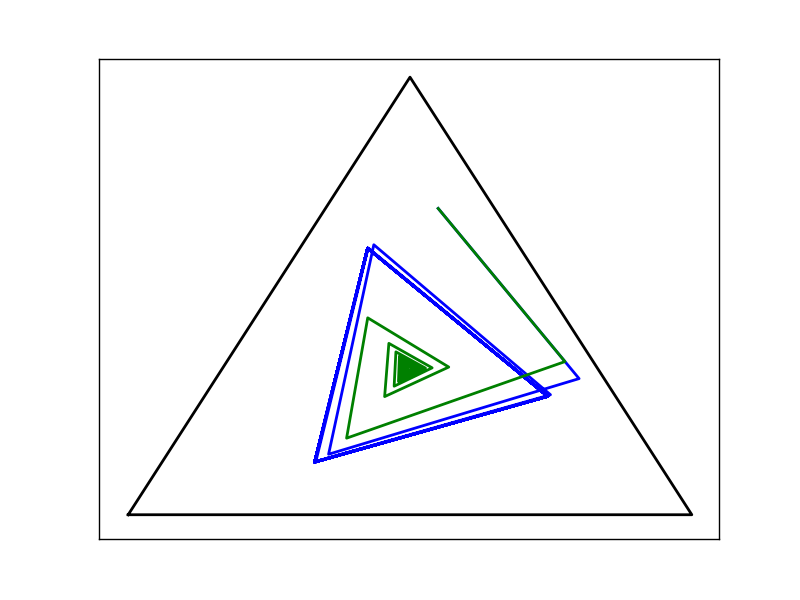}
    \caption{Best reply for $h=1/3$ (blue) and a variable time scale with $h_k = 1 / (k+1)$ (green). Only the latter converges to the interior ESS.}
    \label{fig_2_a}
\end{figure}

\subsection{Time Scale Lyapunov Function}

Lyapunov stability is a frequently-used technique in evolutionary dynamics. These techniques have been ported to the time scale calculus \cite{Bartosiewicz11} \cite{DaCunha-2005} \cite{Davis-Gravagne-Marks-Ramos-2010}. To the reader familiar with the traditional theory, the methods will be very familiar. A time scale Lyapunov function is a positive definite function defined on the trajectories of a dynamic with negative-semi-definite (negative-definite) time scale derivative which then implies stability (asymptotic stability). The interested reader should see \cite{Bartosiewicz11} for a presentation that mirrors the traditional approach with the appropriate changes necessary to extend the stability theory to arbitrary time scales. In particular, we will use Theorems 5.1 and 5.4 in \cite{Bartosiewicz11}, which have two technical conditions that will be familiar to the reader aware of the details of the continuous-time Lyapunov stability theorems. Here we give the necessary time-scale definitions and discuss the technical conditions; for more detail, see \cite{Bartosiewicz11}.
Define $K_h(\ess{x}) = \{ x \in \R^{n} \, : ||x - \ess{x}|| < h \}$. A function $c: [0, h] \to [0, \infty)$ belongs to class K if it is continuous and increasing, and if $c(0) = 0$. A function $v: K_h \times \mathbb{T} \to \R$ is called decrescent if there exists a function $c$ of class K and $t_0 \in \mathbb{T}$ such that for all $t \geq t_0$ and $x \in K_h$, $v(x, t) \leq c(|x - \ess{x}|)$. Under the same conditions, if there exists $c$ such that $v(x, t) \geq c(|x - \ess{x}|)$, $v$ is called positive definite. We will take the definition of \emph{time scale Lyapunov function} to include both conditions on the appropriate time scale; all such Lyapunov functions in this paper satisfy both conditions.

We now define a time scale generalization of the incentive dynamics and generalize the Lyapunov results of \cite{Fryer2012} and \cite{Harper2011} to cover more general time scale dynamics.

\begin{theorem}
For $0 \leq h \leq 1$, $D(x) = D(\ess{x}, x)$ is a ``time-scale Lyapunov function'' for the time scale incentive dynamic iff $\ess{x}$ is an ISS.
\label{thm_iss_lyapunov}
\end{theorem}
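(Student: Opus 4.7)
My plan is to compute the time-scale delta-derivative of $D(\ess{x},x) = \sum_i \ess{x}_i \log(\ess{x}_i / x_i)$ along trajectories of the time-scale incentive dynamic and identify its sign with the ISS gap in Equation~(\ref{iss}). From $x_i(t+h) - x_i(t) = h\bigl(\varphi_i(x) - x_i \sum_j \varphi_j(x)\bigr)$ one has $x_i(t+h)/x_i(t) = 1 + h\bigl(\varphi_i(x)/x_i - \sum_j \varphi_j(x)\bigr)$, and hence
\[
D^{\Delta}(x) = -\frac{1}{h} \sum_i \ess{x}_i \log\!\Bigl(1 + h\Bigl(\frac{\varphi_i(x)}{x_i} - \sum_j \varphi_j(x)\Bigr)\Bigr).
\]
Letting $h \to 0$, a Taylor expansion collapses this to $\dot D(x) = \sum_j \varphi_j(x) - \sum_i \ess{x}_i \varphi_i(x)/x_i$, which is exactly the negative of the ISS gap, so the $h=0$ case of the theorem is immediate in both directions.

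For $h > 0$, I would prove the ``if'' direction (ISS $\Rightarrow$ Lyapunov) using the elementary inequality $\log(1+u) \ge u/(1+u)$ for $u > -1$, equivalently $-\log(1+u) \le -u + u^{2}/(1+u)$. Applied termwise with $u_i = h\bigl(\varphi_i(x)/x_i - \sum_j \varphi_j(x)\bigr)$, this yields
\[
D^{\Delta}(x) \le \Bigl(\sum_j \varphi_j(x) - \sum_i \frac{\ess{x}_i \varphi_i(x)}{x_i}\Bigr) + h\,R(x,h),
\]
where $R(x,h) \ge 0$ is a remainder continuous in $x$ and uniformly bounded on a forward-invariant neighborhood of $\ess{x}$ (using the positivity of $x'_i$). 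The strict ISS inequality makes the first parenthesized term negative on a punctured neighborhood of $\ess{x}$, while both it and $R$ vanish quadratically at $\ess{x}$ (the linear piece of the ISS gap is zero because $\ess{x}$ is a rest point of the incentive, so $\varphi_i(\ess{x})/\ess{x}_i$ is independent of $i$). A matching of leading quadratic forms on the tangent space of the simplex then shows the gap dominates $hR$ locally, forcing $D^{\Delta}(x) < 0$.

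The converse is given by the $h \to 0$ identity above: if ISS fails, there is $x$ arbitrarily close to $\ess{x}$ with nonpositive ISS gap, hence $\dot D(x) \ge 0$, contradicting the Lyapunov property. For the Bartosiewicz positive-definiteness and decrescent conditions, a standard sandwich of $D(\ess{x},\cdot)$ between quadratic class-K functions near $\ess{x}$, obtained from the Fisher-information Hessian of $D$ at $\ess{x}$, suffices.

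The main obstacle I anticipate is the discrete case with $h$ not small: the naive Jensen bound via concavity of $\log$ only yields a lower bound on $D^{\Delta}$, which is the wrong direction, so the proof must use the sharper reciprocal inequality above. Because the ISS gap and the remainder $hR(x,h)$ both vanish to second order at $\ess{x}$, the decisive step is a \emph{local} quadratic-form comparison rather than a global estimate, which is where the strict ISS condition does the actual work.
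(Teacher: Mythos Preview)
Your route differs from the paper's. The paper does not try to control a second-order remainder: it proves (as a special case of the escort lemma preceding Theorem~\ref{thm_tsei}) the remainder-free estimate
\[
\tsd{D}(x)\;\le\;-\sum_i\frac{\ess{x}_i-x_i}{x_i}\,\tsd{x_i},
\]
obtained by bounding the integrals $\int_{x_i}^{x_i'}\escortlog(u)\,du$ and $\int_{x_i}^{x_i'}du/\escort(u)$ via the monotonicity of $\escort$. Substituting $\tsd{x_i}=\incentive_i(x)-x_i\sum_j\incentive_j(x)$ into the right-hand side collapses it exactly to the negative ISS gap, so ISS gives $\tsd{D}<0$ immediately for every $h\in(0,1]$ with no further local analysis. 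Your approach, by contrast, accepts the weaker bound $\tsd{D}\le -(\text{ISS gap})+hR$ and then tries to recover by comparing Hessians.

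That last step is where your argument breaks. You assert that ``a matching of leading quadratic forms'' shows the ISS gap dominates $hR$, but nothing in the ISS hypothesis controls the size of $R$ relative to the gap. Concretely, for the replicator incentive $\incentive_i(x)=x_if_i(x)$ with $f(x)=Ax$ and an interior ESS $\ess{x}$, the ISS gap has Hessian $-y^{T}Ay$ on the tangent space while your remainder has Hessian $\sum_i\ess{x}_i\bigl((Ay)_i-\ess{x}\cdot Ay\bigr)^2$. Replace $A$ by $\lambda A$: the first scales like $\lambda$ and the second like $\lambda^{2}$, so for any fixed $h>0$ the remainder eventually dominates and your inequality $\tsd{D}<0$ fails in every neighborhood of $\ess{x}$. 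The ``strict ISS condition'' buys you positivity of one quadratic form, not domination of another; to rescue your line of argument for all $h\le 1$ you would need a sharper upper bound on $-\log(1+u)$ that does not leave an $O(u^{2})$ term, which is precisely what the paper's integral inequalities are designed to do.
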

This theorem is a special case of Theorem \ref{thm_tsei} and so the proof is deferred. A direct proof for the replicator equation is an straightforward exercise.

\section{ISS and ESS}

A natural question is: for which incentives, $\incentive(x)$, are the stability
concepts ISS and ESS equivalent? Since an interior ISS is unique and asymptotically stable for
the incentive dynamics and an ESS is unique and asymptotically stable for many
known special cases (e.g. the replicator dynamic), either the concepts must
coincide in these cases or only one of the two criterion can hold. 

Without loss of generality, we assume $\sum_i \incentive_i(x) = 0$ for every
$x\in\Delta$. The ESS condition $x\cdot f(x) < \ess{x}\cdot f(x)$ for all $x$ in
a neighborhood of $\ess{x}$ can be rewritten as $\ess{x}\cdot[f(x) - (x\cdot f(x))\textbf{1}] >0$, where $\textbf{1}$ is a vector of all ones. Thus at
a pure ESS, the \emph{payoff positivity} requirement, $\incentive_i(x)/x_i > 0
\Leftrightarrow f_i(x) > x\cdot f(x)$, preserves not only the asymptotic
stability but also the exact basin of attraction as well. Furthermore, the set $U = \{f_i(x) > x\cdot
f(x)\} \bigcap_{j\neq i} \{f_j(x) < x\cdot f(x)\}$ is open and non-empty for a
continuous fitness near a pure ESS. Thus \emph{weak payoff positivity}
\cite{Weibull98}, where at least one above average fitness must have a
positive incentive growth rate, preserves the asymptotic stability of a pure
ESS, albeit for a smaller basin of attraction.

Samuelson and Zhang \cite{samuelson1992evolutionary} introduced a class of games
called \emph{aggregate monotone}, which trivially preserve ESS and the basin of
attraction. However, they prove in the single population case all aggregate
monotone dynamics are dynamically equivalent to the replicator equation. In the
multi-population case the dynamics for each population is a positive function
multiplied by the replicator equation. In either case it does not lead to
interesting further discussion.

\begin{defn}[Aggregate Monotone]
An incentive is \emph{aggregate monotone} if $\forall y,z\in\Delta$, \[y\cdot
f(x) > z\cdot f(x) \Leftrightarrow y\cdot \frac{\incentive(x)}{x} > z \cdot
\frac{\incentive(x)}{x}
\]
\end{defn} 

Attention may be restricted to single population games with mixed ESS, as Selten
\cite{selten1980note} established that an ESS in an asymmetric
game\footnote{The asymmetric condition includes all multi-population
formulations.} must be pure. This result assumes linear fitness which is more
restrictive than necessary for the arguments above. However, the following
results do all assume linear fitness.

First introduced by Nachbar \cite{nachbar1990evolutionary}, the concept of
payoff monotone dynamics is a reasonable starting point for investigation.
\begin{defn}[Payoff Monotone]
An incentive is \emph{payoff monotone} if $\forall i,j$,
\[f_i(x) > f_j(x) \Leftrightarrow \frac{\incentive_i(x)}{x_i} >
\frac{\incentive_j(x)}{x_j}\]
\end{defn}
Unfortunately, Friedman \cite{friedman1991evolutionary}, via a verbal
description, shows that payoff monotone dynamics need not be asymptotically stable at an
internal ESS.

Fortunately, Cressman \cite{cressman1997local} shows that for smooth incentives
the condition of payoff positivity will preserve the asymptotic stability of an
ESS as long as the linearization is not trivial. His results are paraphrased in
this theorem for completeness.

\begin{theorem}
An interior ESS, $\ess{x}$, is asymptotically stable for any smooth payoff
positive dynamic which has a nontrivial linearization about $\ess{x}$.
\label{thm_ess_iss}
\end{theorem}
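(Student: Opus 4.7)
My plan is to adapt the Kullback--Leibler Lyapunov technique used for Theorem \ref{thm_iss_lyapunov}, replacing the ISS inequality with payoff positivity together with the nontrivial-linearization hypothesis. Assume without loss of generality that $\sum_i \incentive_i(x) = 0$, so $\dot{x}_i = \incentive_i(x)$. With $D(x) = \sum_i \ess{x}_i \log(\ess{x}_i / x_i)$ one computes
\[ \dot{D}(x) = -\sum_i \ess{x}_i\,\frac{\incentive_i(x)}{x_i}, \]
and the task reduces to showing $\dot{D}(x) < 0$ on a punctured neighborhood of $\ess{x}$; asymptotic stability then follows from Lyapunov's direct method, or Theorem 5.4 of \cite{Bartosiewicz11} in the time-scale setting.

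The next step is to split this sum by the sign of the excess fitness $p_i(x) := f_i(x) - x\cdot f(x)$. Since $\ess{x}$ is an interior ESS with linear $f$, the values $f_i(\ess{x})$ are all equal and the ESS inequality $(\ess{x} - x)\cdot f(x) > 0$ holds on a punctured neighborhood; payoff positivity then matches the signs of $\incentive_i(x)/x_i$ and $p_i(x)$ coordinate by coordinate. To upgrade this sign compatibility into a strict lower bound on $\sum_i \ess{x}_i \incentive_i(x)/x_i$, I would Taylor-expand $\incentive_i(x)/x_i$ about $\ess{x}$ and use smoothness plus the sign constraint to factor, modulo higher-order terms, as $\incentive_i(x)/x_i = \alpha_i(x)\,p_i(x)$ with $\alpha_i \geq 0$ smooth. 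The nontrivial-linearization hypothesis is precisely what rules out $\alpha_i(\ess{x}) \equiv 0$, so combined with the quadratic ESS inequality on $T\Delta$ one obtains $\dot{D}(x) < 0$ on a punctured neighborhood of $\ess{x}$. Equivalently, one can work entirely at the Jacobian level: the replicator's Jacobian at an interior ESS is negative definite on $T\Delta$, and smooth payoff positivity with a nontrivial linearization transfers this negative definiteness to the incentive's Jacobian via a sign-matching argument on the matrix entries.

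The main obstacle is this factorization (or Jacobian-compatibility) step: payoff positivity only constrains signs, not magnitudes, so one must rule out the degenerate situation in which the nonnegative scaling $\alpha_i$ vanishes at $\ess{x}$ to such high order that higher-order error terms dominate the ESS quadratic form. The nontrivial-linearization hypothesis is exactly what precludes this pathology, but making the argument rigorous requires a careful local analysis on the tangent space of the simplex that I expect to constitute the bulk of the work. Once that is in place, the conclusion $\dot{D} < 0$ on a punctured neighborhood is immediate from the ESS inequality, and asymptotic stability of $\ess{x}$ follows.
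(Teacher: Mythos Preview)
The paper does not supply its own proof of this theorem: it is quoted from Cressman \cite{cressman1997local} and stated ``for completeness'', with no argument given. Your proposal must therefore be weighed against Cressman's original method rather than anything in the paper.

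Your primary route via the Kullback--Leibler divergence has a genuine gap that cannot be repaired by the ``careful local analysis'' you anticipate. Even granting the factorization $\incentive_i(x)/x_i = \alpha_i(x)\,p_i(x)$ with $\alpha_i \geq 0$ smooth, the derivative
\[
\dot D(x) \;=\; -\sum_i \ess{x}_i\,\alpha_i(x)\,p_i(x)
\]
is \emph{not} negative on a punctured neighborhood of $\ess{x}$ in general. For the replicator case ($\alpha_i\equiv 1$) the identity $\sum_i x_i p_i(x)=0$ lets one rewrite $\dot D = -(\ess{x}-x)\cdot f(x)$, which the ESS inequality makes quadratic and sign-definite. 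Unequal weights $\alpha_i$ destroy this cancellation: with linear fitness and $y=x-\ess{x}$, each $p_i$ is linear in $y$ to leading order, so $\dot D$ acquires a nonzero \emph{linear} part in $y$ whenever the $\alpha_i(\ess{x})$ are not all equal, and therefore changes sign in every neighborhood of $\ess{x}$. Thus $D_{KL}$ is simply not a Lyapunov function for generic payoff-positive dynamics, and no amount of higher-order bookkeeping will fix that.

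Cressman's actual proof is the Jacobian-level argument you mention only in passing, and it works differently from your description. Payoff positivity together with smoothness forces the linearization of $\dot x_i = \incentive_i(x)$ at $\ess{x}$ to factor as $\mathrm{diag}(\ess{x}_i\,\alpha_i(\ess{x}))\cdot P$, where $P$ is the Jacobian of $p$ at $\ess{x}$. The ESS condition makes the \emph{symmetric part} of $P$ negative definite on $T\Delta$ (not $P$ itself, contrary to your parenthetical; think of rock--scissors--paper). One then invokes the linear-algebra lemma that a positive diagonal matrix $D$ times a matrix with negative-definite symmetric part has spectrum in the open left half-plane --- proved via the quadratic Lyapunov function $z\mapsto z^{T}D^{-1}z$ for the linearized flow --- and concludes hyperbolic asymptotic stability. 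The nontrivial-linearization hypothesis is what keeps the diagonal factor from degenerating. So the mechanism is a structural factorization of the Jacobian plus this lemma, not a ``sign-matching argument on the matrix entries''.
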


\section{Escorts}

The stability theorem for the incentive dynamic reduces the problem of finding a Lyapunov function to the problem of proving the ISS condition valid for a candidate equilibrium. Consider the projection dynamic, with incentive $\incentive_i(x) = f_i(x) - \frac{1}{n}\sum_j{f_j(x)}.$ From \cite{Lahkar08} and \cite{Sandholm08}, we know that $||\ess{x} - x||^2$ is a Lyapunov function if $\ess{x}$ is ESS, so one would hope that ISS reduces to ESS in this case as well. The ISS condition $(\ess{x} / x) \cdot f(x) > 1 \cdot f(x),$ however, is not obviously the same as the ESS condition. If we assume a two-type linear fitness landscape
\[ f(x) = \left( \begin{matrix}
          a & b \\
          c & d 
          \end{matrix} \right) x, \]
it is a straightforward derivation to show that the ISS condition leads to
\[ \left[ (a-c) x_1 + (b-d) x_2 \right] \left(x_2 \ess{x}_1 - x_1 \ess{x}_2
\right) > 0,\]
which along with the constraint that $x_1 + x_2 = 1$ is the well-known condition for the existence of an internal ESS (and more generally, a Nash equilibrium). Similarly, we can give a family of examples that includes the projection dynamic. Consider the escort dynamics defined in \cite{Harper2011}, where $\escort$ is a positive non-decreasing function:
\begin{equation}
    \dot{x_i} = \escort_i(x) \left( f_i(x) - \frac{\sum_j{\escort_j(x) f_j(x)}}{\sum_j{\escort_j(x)}} \right).
\label{escort_dynamic}
\end{equation}
For these dynamics, an incentive is given by the right-hand-side of Equation \ref{escort_dynamic} and the ISS condition is \[ \sum_i{ \frac{\ess{x}_i \escort_i(x) f_i(x)}{x_i}} > \sum_i{ \frac{x_i \escort_i(x) f_i(x)}{x_i}}, \] which only clearly reduces to ESS for the replicator dynamic ($\escort(x) = x$). Nevertheless, it was shown in \cite{Harper2011} that an ESS, if it exists, is asymptotically stable for these dynamics, so it is desirable to have a generalization of Theorem \ref{thm_ess_iss} that captures this family as well.

To this end we introduce a functional parameter $\escort$ called an escort and related quantities.  An escort is a function $\escort$ that is nondecreasing and strictly positive on $(0, 1]$. A vector-valued function $\escort(x) = (\escort_1(x), \ldots, \escort_n(x))$ is called an escort if each component is an escort. We denote the normalized escort vector by $\escortdist$, i.e. normalized such that $\escortdist(x)$ is in the simplex. Generalized information divergences are defined by generalizing the natural logarithm using an escort function. See \cite{Harper2011} and \cite{Naudts04} for more details.

\begin{defn}[Escort Logarithm]
Define the escort logarithm
\[ \escortlog(x) = \int_{1}^{x}{\frac{1}{\escort{(v)}} \, dv} \]
\end{defn}

The escort logarithm shares several properties with the natural logarithm: it is negative and increasing on $(0,1]$ and concave on $(0,1]$ if $\escort$ is strictly increasing.

\begin{defn}[Escort Divergence]
Define the escort divergence
\[ D_{\escort}(x, y) = \sum_{i=1}^{n}{\int_{y_i}^{x_i}{ \escortlogi{(u)} -
\escortlogi{(y_i)} \, du} }.\]
\end{defn}

Since the logarithms are increasing on $(0,1)$, this divergence satisfies the usual properties of an information divergence on the simplex: $D(x, y) > 0$ if $x \neq y$ and $D(x, x) = 0$.

\section{The Time Scale Escort Incentive Dynamic}

Now we are able to define the time scale escort incentive dynamic and give the main theorems.
\begin{defn}
Define the time scale escort incentive dynamic to be
\[ \tsd{x_i} = \incentive_i(x) - \escortdist_i(x) \sum_{j}{\incentive_j(x)} \]
\label{tsed}
\end{defn}

We also need a definition of ISS that incorporates the escort parameter.

\begin{defn}
Define $\ess{x}$ to be an escort ISS (or EISS) for an incentive $\incentive$ and an escort $\escort$ if for all $x$ in some neighborhood of $\ess{x}$ the following inequality holds:
\[ \sum_{i}{ \frac{\ess{x}_i \incentive_i(x)}{\escort_i(x)}} > \sum_{i}{\frac{x_i \incentive_i(x_i)}{\escort_i(x)}} \]
\end{defn}
Remark: This definition can be understood in terms inner products of a Riemannian metric $g_{ij}(x) = \delta_{ij} / \escort_i(x)$.

For one example of a choice of escorts leading to interesting dynamics, consider the escort $\escort(x) = \beta x$. This introduces an intensity of selection parameter that alters the velocities of the dynamic (but not the stable point, if any). In fact, each type can have its own intensity of selection, and its own geometry. A popular choice of escort in information geometry is $\escort(x) = x^q$, which gives q-analogs of logarithms, exponentials, and divergences \cite{Naudts04}. See \cite{Harper2011} for more examples. The corresponding q-divergence is given by:

\[ D_q(x||y) = \begin{cases}
\frac{1}{2}||x - y||^2 & \text{if } q=0 \\
D_{KL}(x||y) & \text{if } q=1 \\
\sum_{i}{\left[\log{\frac{x_i}{y_i}} + \frac{y_i-x_i}{x_i} \right]} & \text{if } q=2 \\
\frac{1}{1-q}\sum_{i}{\left[ \frac{y_i^{2-q} - x_i^{2-q}}{2-q} - y_i^{1-q}(y_i-x_i)\right]} & \text{if } q \geq 0, q \neq 1,2 \\
\end{cases}\]

\begin{figure}[h]
        \begin{subfigure}[b]{0.5\textwidth}
                \centering
                \includegraphics[width=\textwidth]{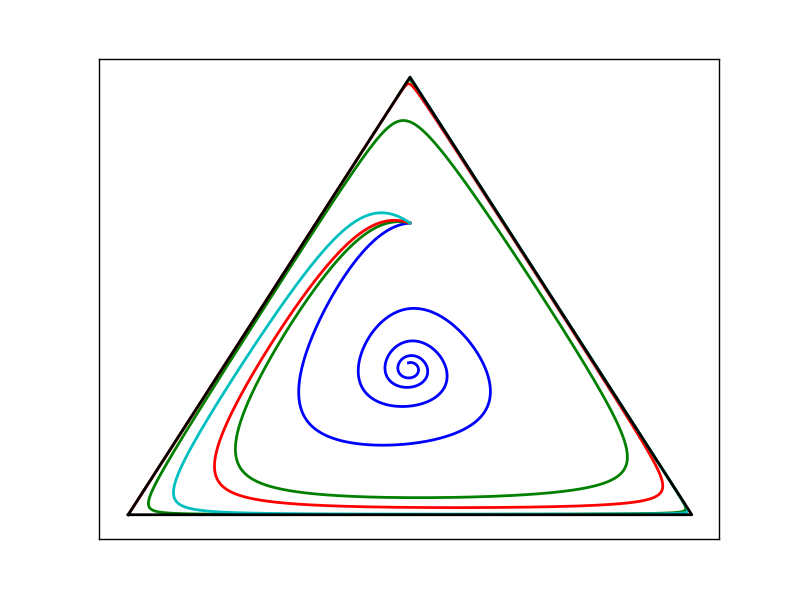}
                \label{fig_4_1}
        \end{subfigure}%
        ~ 
        \begin{subfigure}[b]{0.5\textwidth}
                \centering
                \includegraphics[width=\textwidth]{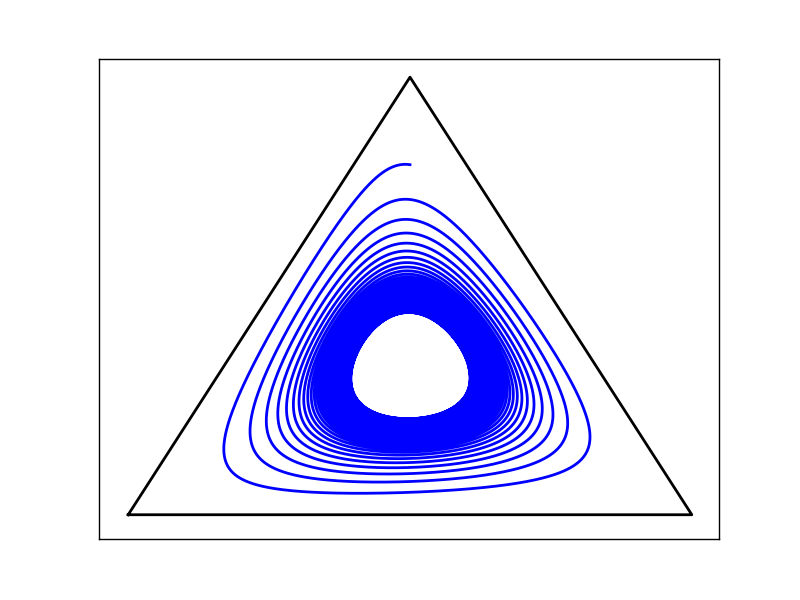}
                \label{fig_4_2}
        \end{subfigure}
        \caption{q-Escorts for various $q$, with replicator incentive given by a linear fitness landscape with RSP game matrix (a=1, b=2). Left: $q \in \{0.2, 0.8, 1.0, 2.0\}$ with colors blue, green, red, and cyan respectively; Right: $q=4$. The standard replicator equation diverges for $q=1$ but this is not always the case for the q-escort.}
        \label{fig_4}
\end{figure}

As in the case of the incentive dynamic, the continuous-time escort incentive dynamic is a special case of the escort replicator dynamic. Consider the escort exponential $\escortexp$, which is the functional inverse of the escort logarithm. It has the following crucial property:
\[ \frac{d}{dx}{\escortexp{x}} = \phi(\escortexp{x})\]

Now let $x_i = \escortexpi(v_i - G)$ and consider the derivative $\dot{x}_i = \escort(\escortexpi(v_i - G)) ( \dot{v}_i - \dot{G}) = \escort_i(x) (\dot{v}_i - \dot{G})$. Then if we can solve the following two auxillary equations we can also solve the escort incentive dynamic: \[ \dot{v}_i = \frac{\incentive_i(x)}{\escort_i(x)} \]
\[ \dot{G} = \frac{\sum_j{\incentive_j(x)}}{\sum_j{\escort_j(x)}} \]

This also shows how we can translate the escort incentive into the escort replicator equation. Given the escort incentive
\[ \dot{x}_i = \incentive_i(x) - \escortdist_i(x) \sum_{j}{\incentive_j(x)}, \]
define a fitness landscape \[f_i(x) = \frac{\incentive_i(x)}{\escort_i(x)}.\]
Then the escort incentive equation is an escort replicator equation, and the ESS condition $\hat{x} \cdot f(x) > x \cdot f(x)$ is the EISS condition
\[ \sum_{i}{ \frac{\ess{x}_i \incentive_i(x)}{\escort_i(x)}} > \sum_{i}{ \frac{x_i \incentive_i(x)}{\escort_i(x)}}. \]

That an $\ess{x}$ is ESS iff the escort divergence is a Lyapunov function was shown in \cite{Harper2011} for the continuous escort replicator dynamic. What remains to be shown now is the extension of this result to time scales $\mathbb{T} = h\mathbb{Z}$ for $0 < h \leq 1$, and the relationship between the concepts of escort ISS and ESS. Before doing so, consider the following examples. In Figure \ref{fig_4}, replicator incentives for RSP matrices are plotted with various $q$-incentives. Notice that the equilibrium is not an ESS for the fitness landscape $f$ yet the dynamic converges to the center of the simplex for some choices of the escort. In Figure \ref{fig_5} we have phase plots for dynamics with the same landscape but with both q-replicator incentives and q-escorts. In this case the conditions for E-ISS and ESS are the same, so all the dynamics converge. The KL-divergences are not monotonically decreasing in all cases, but the corresponding escort-divergences are in fact Lyapunov functions, as shown in Figure \ref{fig_6}. The dynamic in this case is given by:
\begin{equation}
\dot{x}_i = x_i^{q} f_i(x) - \frac{x_i^q}{\sum_{j}{x_j^{q}}} \sum_j{x_j^q f_j(x)}.
\label{double_q}
\end{equation}

\begin{figure}[h]
        \begin{subfigure}[b]{0.5\textwidth}
                \centering
                \includegraphics[width=\textwidth]{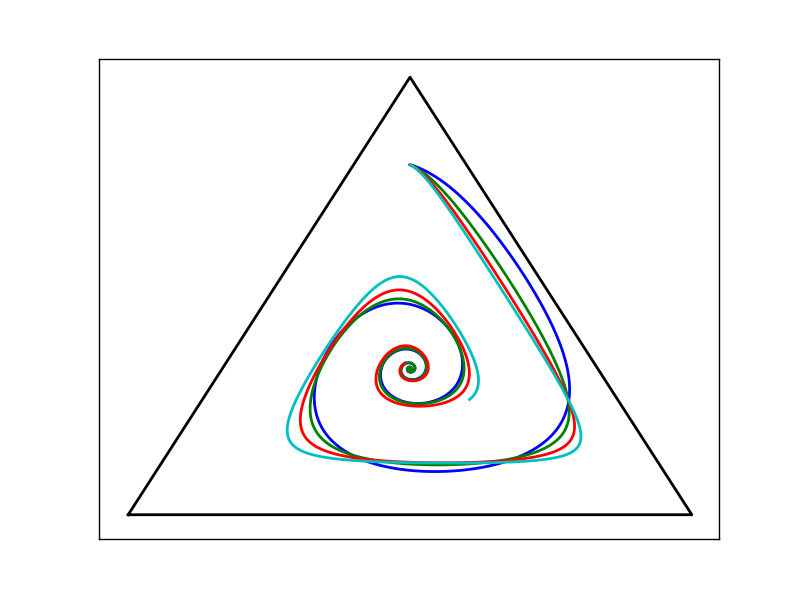}
                \label{fig_5_1}
        \end{subfigure}%
        ~ 
        \begin{subfigure}[b]{0.5\textwidth}
                \centering
                \includegraphics[width=\textwidth]{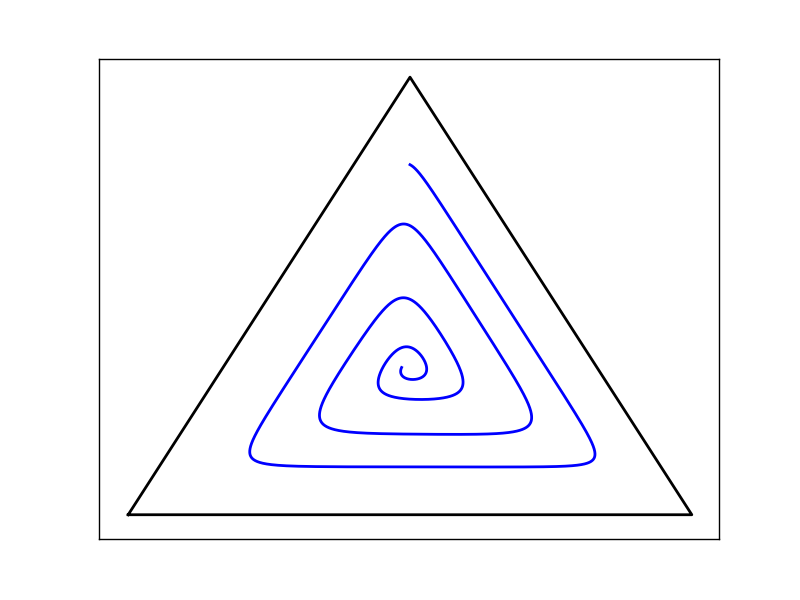}
                \label{fig_5_2}
        \end{subfigure}
        \caption{Plots of Equation \ref{double_q} for various $q$. Left: $0.5, 1.0, 1.5, 2.0$ blue, green, red, cyan, magenta, repsectively; Right: $q=4$.}
        \label{fig_5}
\end{figure}

\begin{figure}[h]
        \begin{subfigure}[b]{0.5\textwidth}
                \centering
                \includegraphics[width=\textwidth]{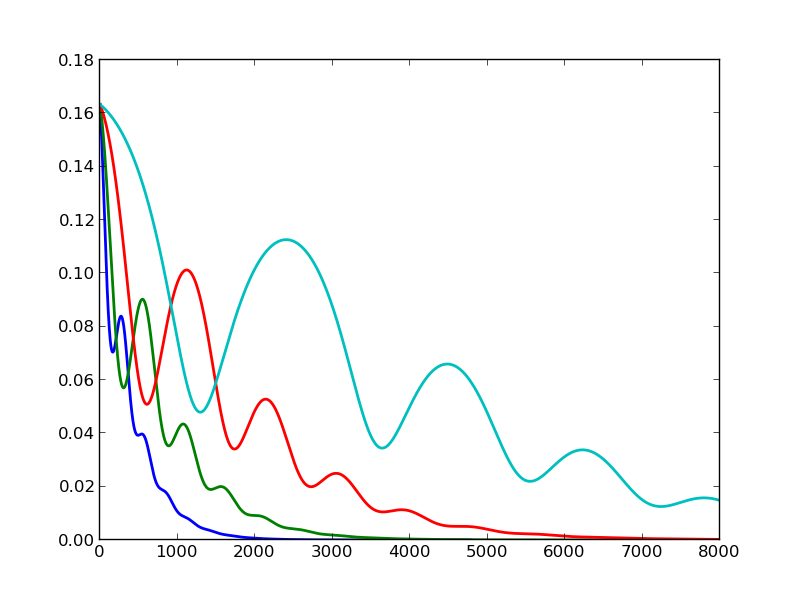}
                \label{fig_6_1}
        \end{subfigure}%
        ~ 
        \begin{subfigure}[b]{0.5\textwidth}
                \centering
                \includegraphics[width=\textwidth]{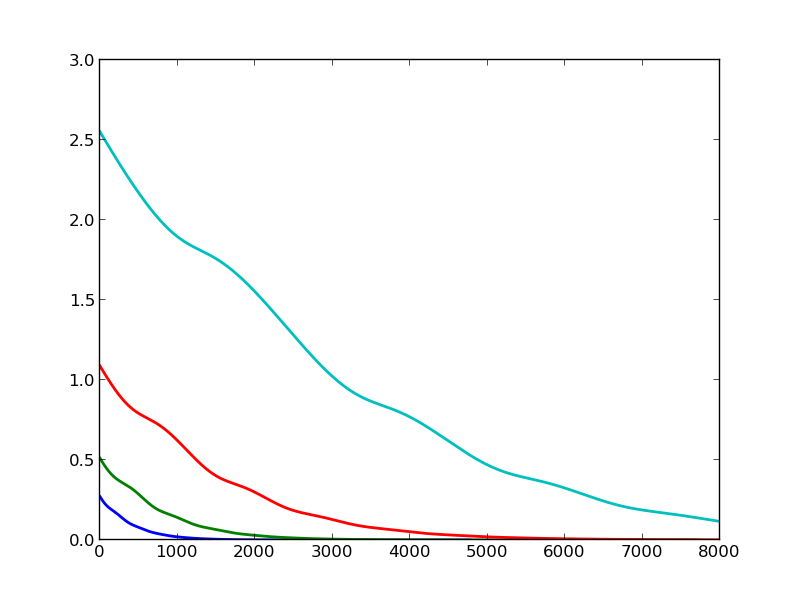}
                \label{fig_6_2}
        \end{subfigure}
        \caption{Plots of candidate Lyapunov functions for q-escort and q-incentive. Left: KL-divergence for all dynamics; Right: q-divergences for the corresponding $q$ for each dynamic. Colors correspond to the dynamics in Figure \ref{fig_5} }
        \label{fig_6}
\end{figure} 

Now we state and prove the main theorem of this section.


\begin{theorem}
Let $\incentive$ be an incentive function and let $\escort$ be an escort. If $\hat{x}$ is an escort ISS then $D(x) = \escortdiv(\ess{x}, x)$ is a local time scale Lyapunov function for the time scale escort incentive dynamics.
\label{thm_tsei}
\end{theorem}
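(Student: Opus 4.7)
The plan is to verify each requirement of a time scale Lyapunov function for $D(x)=\escortdiv(\ess{x}, x)$: positive definiteness, decrescence, and negative semi-definiteness of $\tsd{D}$ in a neighborhood of $\ess{x}$. The first two are structural. Because $\escort$ is strictly positive and nondecreasing, each $\escortlogi$ is strictly increasing, so $D$ can be written as a Bregman divergence generated by convex potentials $\Phi_i$ with $\Phi_i'=\escortlogi$. Hence $D(\ess{x})=0$, $D(x)>0$ elsewhere, and continuity of the second derivatives of $\Phi_i$ on a compact neighborhood of $\ess{x}$ supplies the class-K bounds from above and below. The real content lies in showing $\tsd{D}\le 0$.

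For the continuous case $\mathbb{T}=\R$, differentiate $D$ by Leibniz to get $\partial D/\partial x_i = -(\ess{x}_i-x_i)/\escort_i(x)$, then apply the ordinary chain rule. Substituting $\dot{x}_i=\incentive_i(x)-\escortdist_i(x)\sum_j \incentive_j(x)$ and summing over $i$, the term generated by $\escortdist_i(x)\sum_j\incentive_j(x)$ disappears, since $\escortdist_i(x)/\escort_i(x)=1/\sum_k\escort_k(x)$ is independent of $i$ while $\sum_i(\ess{x}_i-x_i)=0$. What remains is
\[
\dot{D}(x) = \sum_i \frac{x_i\incentive_i(x)}{\escort_i(x)} - \sum_i \frac{\ess{x}_i\incentive_i(x)}{\escort_i(x)},
\]
which is strictly negative on a neighborhood of $\ess{x}$ by the escort ISS hypothesis.

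For the discrete case $\mathbb{T}=h\mathbb{Z}$ with $0<h\le 1$, I would use the mean-value form of the time-scale chain rule from \cite{Bohner-Peterson-2003} to write
\[
\tsd{D}(x) = -\sum_i \tsd{x_i} \int_0^1 \frac{\ess{x}_i - x_i - \eta h \tsd{x_i}}{\escort_i(x_i + \eta h \tsd{x_i})}\, d\eta.
\]
This splits as $-\sum_i (\ess{x}_i - x_i)\tsd{x_i}\, A_i(x,h) + h\sum_i (\tsd{x_i})^2 B_i(x,h)$, with $A_i,B_i>0$ weighted averages of $1/\escort_i$ along the chord. Substituting the definition of $\tsd{x_i}$ into the first piece and invoking the same cancellation as in the continuous argument (now valid only up to an $O(h)$ correction because $\escort_i$ is evaluated at shifted points), it converges, as $h\to 0$ and $x\to \ess{x}$, to the negation of the EISS quantity; the second piece is majorized by $Ch\|\tsd{x}\|^2$ via smoothness of $\escort$ and $\incentive$ at $\ess{x}$.

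The main obstacle is that for fixed $h$ both pieces are of order $\|x-\ess{x}\|^2$ near $\ess{x}$, so dominance of the negative first piece is not automatic from the EISS inequality alone. My resolution exploits the \emph{local} nature of the claim: for each $h\in(0,1]$, one chooses a sufficiently small neighborhood of $\ess{x}$ on which continuity of $A_i(x,h)$ and boundedness of $B_i(x,h)$, combined with the strict EISS gap, yield $\tsd{D}(x)<0$ whenever $x\neq\ess{x}$. Passing to the limit $h\to 0$ recovers the continuous case, yielding a unified stability conclusion across both admissible time scales.
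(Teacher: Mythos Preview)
Your continuous-time argument matches the paper's: differentiate $D$, substitute the dynamic, observe the exact cancellation of the $\escortdist_i(x)\sum_j\incentive_j(x)$ term, and recognize the remainder as the EISS quantity. That part is fine.

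The discrete case is where your route diverges from the paper's and where a real gap appears. Your mean-value chain rule evaluates $\escort_i$ along the chord $x_i+\eta h\,\tsd{x_i}$, so the weights $A_i(x,h)$ are not exactly $1/\escort_i(x_i)$, and the cancellation that kills the $\escortdist$ term only holds up to an $O(h)$ remainder. You then try to absorb this remainder together with the quadratic piece $h\sum_i(\tsd{x_i})^2 B_i$ by shrinking the neighborhood and appealing to a ``strict EISS gap.'' But the EISS hypothesis is merely a strict inequality; it gives no quantitative lower bound of order $\|x-\ess{x}\|^2$ on the EISS expression. Since both your negative piece and your positive remainder are of the same order $\|x-\ess{x}\|^2$ near $\ess{x}$, dominance cannot be concluded without an additional second-order hypothesis you have not assumed. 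The resolution you sketch does not close.

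The paper avoids this entirely with a cleaner device you are missing: it uses the monotonicity of $\escort$ directly, via the elementary one-sided integral bounds
\[
\int_a^b \frac{du}{\escort(u)} \leq \frac{b-a}{\escort(a)}, \qquad \int_a^b \escortlog(u)\,du \leq (b-a)\,\escortlog(b),
\]
to obtain the lemma
\[
\tsd{D}(x) \leq -\sum_i \frac{\ess{x}_i - x_i}{\escort_i(x_i)}\,\tsd{x_i},
\]
valid for every $h\in(0,1]$. The crucial feature is that $\escort_i$ on the right is evaluated at the \emph{current} point $x_i$, not at a shifted point. Substituting the dynamic then gives the \emph{exact} cancellation of the $\escortdist$ term (since $\escortdist_i(x)/\escort_i(x_i)$ is independent of $i$), and what remains is precisely the EISS quantity --- with no $h$-dependent remainder and hence no need to shrink the neighborhood as a function of $h$. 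That monotonicity lemma is the idea your argument lacks.
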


The proof follows easily from the established facts and the following lemma.

\begin{lemma}
\[ \tsd{D}(x) \leq -\sum_{i}{\frac{\ess{x}_i - x_i}{\escort_i(x)}\tsd{x_i} } \]
Equality holds in the limit that $h \to 0$, i.e. for $\mathbb{T} = \mathbb{R}$.
\end{lemma}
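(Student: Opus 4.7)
The plan is to reduce the escort divergence to a sum of Bregman divergences and then apply the standard three-point Bregman identity together with non-negativity of a Bregman divergence.

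\textbf{Reduction to Bregman form.} First I would let $F_i$ be an antiderivative of $\escortlogi$; then $F_i$ is strictly convex because $F_i'' = \escortlogi' = 1/\escort_i > 0$. Evaluating the integral in the definition of $\escortdiv$ gives
\[
\escortdiv(\ess{x}, x) = \sum_i \bigl[F_i(\ess{x}_i) - F_i(x_i) - \escortlogi(x_i)(\ess{x}_i - x_i)\bigr],
\]
exhibiting $\escortdiv$ as a sum of one-dimensional Bregman divergences, with $\partial_{x_i} \escortdiv(\ess{x}, x) = -(\ess{x}_i - x_i)/\escort_i(x_i)$. For $\mathbb{T} = \R$ the chain rule then gives $\dot{\escortdiv} = -\sum_i \frac{\ess{x}_i - x_i}{\escort_i(x)}\dot{x}_i$, proving the equality claim at $h \to 0$.

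\textbf{The discrete step.} For $\mathbb{T} = h\mathbb{Z}$ I would set $x' := x + h\tsd{x}$ and use the three-point Bregman identity, obtained by direct algebra from the reduction above:
\[
\escortdiv(\ess{x}, x') - \escortdiv(\ess{x}, x) = -\escortdiv(x', x) + \sum_i\bigl(\escortlogi(x_i) - \escortlogi(x'_i)\bigr)(\ess{x}_i - x'_i).
\]
Since $\escortdiv(x', x) \geq 0$, dropping this non-negative slack and dividing by $h$ yields an upper bound
\[
\tsd{\escortdiv} \leq \sum_i \frac{\escortlogi(x_i) - \escortlogi(x'_i)}{h}(\ess{x}_i - x'_i).
\]

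\textbf{Identification of the right-hand side.} The last step rewrites the discrete increment. By the mean value theorem, $(\escortlogi(x'_i) - \escortlogi(x_i))/h = \tsd{x}_i/\escort_i(\xi_i)$ for some $\xi_i$ between $x_i$ and $x'_i$. As $h \to 0$ both $\xi_i \to x_i$ and $x'_i \to x_i$, so this quantity converges to $\tsd{x}_i/\escort_i(x_i)$ and the upper bound collapses to the claimed equality. For positive $h$ I would use concavity of $\escortlogi$ (which holds because $\escort_i$ is non-decreasing) to replace $\escort_i(\xi_i)$ and $x'_i$ by the endpoint values $\escort_i(x)$ and $x_i$ in the direction that preserves the inequality, arriving at the stated bound.

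\textbf{Main obstacle.} The hardest part will be the coordinate-wise sign bookkeeping in this last replacement: on each coordinate the sign of $\ess{x}_i - x_i$ must be compatible with the side of the concavity inequality applied to $\escortlogi$, and any residual error has to be dominated by the Bregman slack $\escortdiv(x', x)$ already discarded. The hypothesis $0 \leq h \leq 1$ together with the monotonicity of each escort component is what allows this matching to go through.
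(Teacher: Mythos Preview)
Your Bregman-divergence framing is a genuinely different organization from the paper's proof. The paper computes $h\tsd{D}$ directly from the definition and then applies two termwise integral bounds, $\int_{x_i}^{x'_i}\escortlogi(u)\,du \le (x'_i-x_i)\escortlogi(x'_i)$ and $\int_{x_i}^{x'_i}du/\escort_i(u)\le (x'_i-x_i)/\escort_i(x_i)$, in succession; there is no three-point identity and no explicit Bregman slack discarded. Your route is arguably cleaner conceptually: the three-point identity makes the exact error term $\escortdiv(x',x)$ visible, and the continuous case falls out immediately from the gradient formula $\partial_{x_i}\escortdiv(\ess{x},x)=-(\ess{x}_i-x_i)/\escort_i(x_i)$.

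Where your proposal breaks down is precisely the step you flag as the main obstacle, and it is not a matter of bookkeeping that ``goes through'' under $0\le h\le 1$. After you drop $\escortdiv(x',x)\ge 0$, you are left with
\[
h\,\tsd{D}\ \le\ \sum_i\bigl(\escortlogi(x_i)-\escortlogi(x'_i)\bigr)(\ess{x}_i-x'_i),
\]
and you propose to replace $\escortlogi(x'_i)-\escortlogi(x_i)$ by $(x'_i-x_i)/\escort_i(x_i)$ and $\ess{x}_i-x'_i$ by $\ess{x}_i-x_i$ using concavity and monotonicity. But each such replacement preserves the direction of the inequality only when the accompanying prefactor has a fixed sign, and neither $\ess{x}_i-x_i$ nor $x'_i-x_i$ is sign-definite coordinate by coordinate. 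Concretely, take $\escort\equiv 1$ so that $\escortdiv(\ess{x},x)=\tfrac12\|\ess{x}-x\|^2$; a direct computation gives
\[
h\,\tsd{D}\;=\;-\sum_i(\ess{x}_i-x_i)(x'_i-x_i)\;+\;\tfrac12\|x'-x\|^2,
\]
which \emph{exceeds} the claimed bound by the strictly positive amount $\tfrac12\|x'-x\|^2$. So no amount of sign matching can rescue the endpoint replacement in general, and the discarded slack $\escortdiv(x',x)$ does not dominate this residual---in the Euclidean case it \emph{equals} it and has already been thrown away in the wrong direction. The paper's own second inequality is applied under exactly the same implicit sign assumption on $\ess{x}_i-x_i$, so the delicacy you sensed is real and is not resolved by either argument as written.
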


\begin{proof}
Since escort functions are nondecreasing and positive on $(0,1]$, we have the following two facts:
\[ \int_{a}^{b}{\frac{du}{\escort(u)}} \leq \frac{b-a}{\escort(a)}\]
\[ \int_{a}^{b}{\escortlog(u) \, du} \leq (b-a) \escortlog(b) \]
Now we have
\begin{align*}
h \tsd{D}(x) &= \sum_i{ \int_{x_i}^{x'_i}{ \escortlog(u) \, du}} -
\sum_{i}{\left[ \left(\ecross{x'}{x'} - \ecross{\ess{x}}{x'} \right) -
\left(\ecross{x}{x} - \ecross{\ess{x}}{x} \right) \right]} \\
&\leq \sum_i{ \left(x'_i - x_i \right) \escortlog(x'_i)} -
\sum_{i}{\ecross{x'}{x'} - \ecross{\ess{x}}{x'} - \ecross{x}{x} +
\ecross{\ess{x}}{x}} \\
&= \sum_i{ \left(\ess{x}_i - x_i \right) \left( \escortlogi{x'_i} -
\escortlogi{x_i} \right) } \\
&= -\sum_i{ \left(\ess{x}_i - x_i \right) \int_{x_i}^{x'_i}{
\frac{du}{\escort_i(u)} }}\\
&\leq -\sum_i{ \left(\ess{x}_i - x_i \right) \frac{x'_i -
x_i}{\escort_i(x)}}\\
\end{align*}
Bringing $h$ to the right hand side completes the lemma for $h > 0$. Equality for $h=0$ (i.e. $\mathbb{T} = \mathbb{R}$) can be directly verified with differentiation (and is given in \cite{Harper2011}).
\end{proof}

To complete the proof of Theorem \ref{thm_tsei} we need only apply the lemma to the respective dynamics, use the definition of EISS, and verify the two technical conditions of the time scale Lyapunov theorem.
\begin{proof}[Proof of theorem \ref{thm_tsei}]
Using the lemma and substituting the right-hand side of equation \ref{tsed} gives:
\begin{align*}
\tsd{D}(x) &\leq -\sum_{i}{\frac{\ess{x}_i - x_i}{\escort_i(x)}\tsd{x_i} }\\
&= -\sum_{i}{\frac{\ess{x}_i - x_i}{\escort_i(x)}}\left(\incentive_i(x) - \escortdist_i(x) \sum_{j}{\incentive_j(x)}\right)\\
&= (x_i - \ess{x}_i) \left( f_i(x) - \bar{f}(x)  \right) = x \cdot f(x) - \ess{x} \cdot f(x) 
\end{align*}
where $f$ is the effective landscape. If $\ess{x}$ is an EISS, then the right-hand side is negative.
\end{proof}

Throughout this paper, it may appear that all the examples we have given are for continuous dynamics. In fact, every phase plot in this paper is for a discrete time-scale with $h \approx 1/100 \to 1/1000$ unless otherwise indicated, and in all previously known cases, the results have coincided with the expectation for the continuous dynamics. Hence all the examples in this paper illustrate the main theorem for these particular time-scales.

Let us return to the example of the projection dynamic. Previously we saw that the orthogonal projection dynamic was obtainable from the incentive $\incentive_i(x) = f_i(x) - (1/n) \sum_j{f_j(x)}$ and the escort $\escort(x) = x$ on interior trajectories, but the ISS condition did not appear to be the same as ESS (though we were able to argue for two-player linear landscapes that they are equivalent). With the generalized theorem, we can also obtain the dynamic from the incentive $\incentive_i(x) = f_i(x)$ and $\escort(x) = 1$, and now the EISS condition reduces immediately to ESS. Moreover, the Lyapunov function given by the escort information divergence is, remarkably, one-half the Euclidean distance: $D(x) = (1/2) ||\ess{x} - x||^2$, capturing the known result up to a constant factor of one-half \cite{Nagurney97} \cite{Lahkar08}.

From the these results it should be clear that if the incentive factors as $\incentive_i(x) = \escort_i(x) g_i(x)$ then the condition for EISS will be the condition for ESS for the landscape $g$. So in particular if the function $g$ is payoff monotonic, these dynamics have similar interior dynamics (though with different Lyapunov functions). If the incentive factorizes as $\incentive_i(x) = x_i \escort_i(x_i) g_i(x)$, then the ESS criterion is the ISS criterion for the incentive $g$. So while the theorem covers all combinations of valid incentives and escorts, it is clear that careful choices may lead to simplifications in the stability criterion. The projection dynamic is somewhat special in that it can be described equivalently by multiple choices of escort and incentive. It is also clear that while there exist known Lyapunov functions for some dynamics that are not the KL-divergence (or a generalization) such as the best reply dynamic, the incentive dynamics approach yields a commonly-derived and motivated Lyapunov function.

\subsection{A Variation of the Replicator equation}
We can also define novel dynamics by combing escorts and incentives. Consider the following dynamic, using the replicator incentive $\incentive_i(x) = x_i f_i(x)$ and the escort $\escort_i(x) = 1$: Equation (\ref{tsed}) gives the dynamic as
\[ \dot{x}_i = x_i f_i(x) - \frac{1}{n}\bar{f}(x).\]
In this case the EISS criterion is $\sum_i{\ess{x}_i x_i f_i(x)} > \sum_i{x_i x_i f_i(x)}$, which is the same for the dynamic obtained with $\incentive_i(x) = x_i^2 f_i(x)$ and $\escort_i(x_i) = x_i$. These two dynamics differ qualitatively, however, as we can see for a constant fitness landscape $f_i(x) = 1$ for all $i$. While both dynamics have the barycenter of the simplex of rest points, the former is not forward-invariant on the interior and has no other rest points, while the latter has rest points on the boundary and the barycenters of the boundary simplices. Moreover for an RSP matrix, while the replicator equation converges to an interior ESS, the projection variant does not. In general, this dynamic may not be forward invariant on the simplex since while the sum $\sum_i{\dot{x_i}} = 0$, the individual components can become negative for certain states and fitness landscapes. 


\subsection{Best Reply Variants}
Similarly, we can define a projection best-reply dynamic using p-escorts. See Figure \ref{fig_7}. Note that for $p=0.5$, the dynamic is not forward-invariant on the interior. Starting near the barycenter yields ``Shapley-like-triangles''. These can be extended cycles that jump on and off the boundary if the best reply incentive is combined with the projection incentive (cyan curves in Figure \ref{fig_7}). Using just the projection escort (i.e. $q=0$) would not yield this behavior (trajectories remain on the boundary once they reach it).

\begin{figure}[h]

\end{figure} 

\begin{figure}[h]
        \begin{subfigure}[b]{0.5\textwidth}
            \centering
            \includegraphics[width=\textwidth]{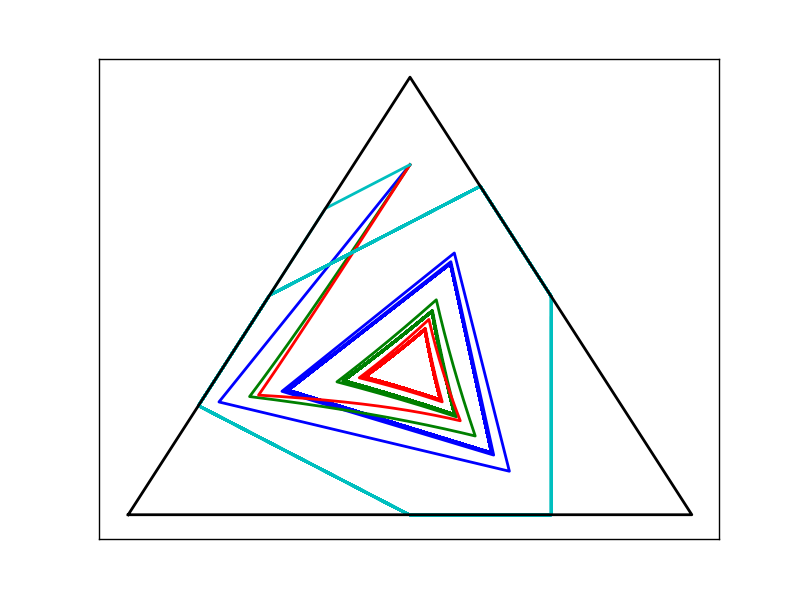}
            \label{fig_7_1}
        \end{subfigure}%
        ~ 
        \begin{subfigure}[b]{0.5\textwidth}
            \centering
            \includegraphics[width=\textwidth]{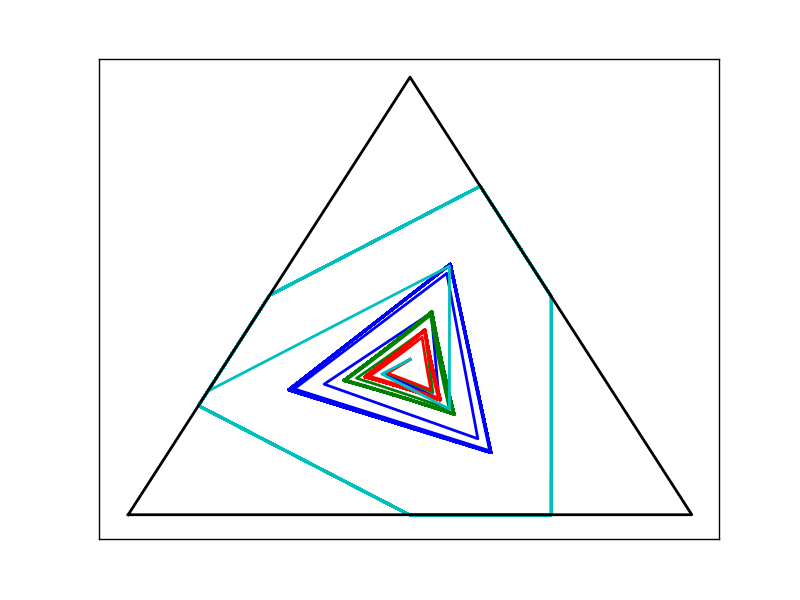}
            \label{fig_7_2}
        \end{subfigure}
        \caption{Left: Best reply incentives with $q=0.6, 1, 3$ and combined with projection incentive at initial point $(1/10, 1/10, 8/10)$ blue, green, red, respectively; Right: Same with initial point $(1/3.1, 1/3.1, 1.1/3.1)$.}
        \label{fig_7}
\end{figure} 

\subsection{Incentive-Escort Dynamic is just the Incentive Dynamic; Zero-Sum Incentives}

We have seen that the incentive-escort dynamic can be rewritten as the escort dynamic; it can also be rewritten as the incentive dynamic by simply taking the right-hand side as the incentive. As we have seen, however, one may miss the stability of an internal equilibrium by trying to apply the KL-divergence as a Lyapunov function since it is not necessarily monotonic. So while the decomposition of an incentive dynamic into an incentive-escort dynamic does not yield a larger class of dynamics, it does yield stability theorems for more members of the class.

Zero-sum incentives, when $\sum_{i}{\incentive_i(x)} = 0$ are informative here. First, notice that in this case the escort-incentive dynamic reduces to just the incentive dynamic, producing the exact same trajectory for any escort. One such example is the replicator incentive for a RSP matrix with $a=1=b$. On the other hand, any incentive can be made into a zero sum incentive by subtracting $y \left(\sum_{i}{\incentive_i(x)}\right)$ from the incentive for any value of $y$ in the simplex, and the choice of $y$ can vary at each point in the simplex. In other words, it requires a choice of escort, or more generally, a choice of Riemannian metric. In this case, there is a preferred choice of incentive, and so a natural candidate for a Lyapunov function.

\section{Time-Scale Lyapunov functions for Adaptive Dynamics}

As suggested in the last section, we now consider the adaptive dynamics for a Riemannian metric $G$ on the simplex, defined in \cite{Hofbauer90}. Let $g = G^{-1} \mathbf{1}$ and $C = G^{-1} - (g^T \mathbf)^{-1} g g^T$. Then the time-scale adaptive dynamics for the metric G and a fitness landscape $f$ is $x^{\Delta}_i = \sum_{j}{C_{ij} f_j}$. The adaptive dynamics includes the escort dynamics as a special case. A geodesic approximation is shown to be a local Lyapunov function in \cite{Hofbauer90} for the continuous dynamic; to obtain a time-scale Lyapunov function we use a global divergence, which we now give define. Let
\[ D_G(x) = D(\hat{x} || x) = \sum_{i,j}{\int_{x_i}^{\hat{x}_i}{ \left(\log G\right)_{ij}(v) - \left(\log G \right)_{ij}(\hat{x}_i) \, dv}},\]
where \[ \left(\log G\right)_{ij}(x) = \int_{1}^{x}{G_{ij}(v) \, dv}.\]

\begin{theorem}
Let $G$ be a Riemannian metric and assume that the reciprocal of each component $G_{ij}$ is an escort function. Then
\begin{enumerate}
    \item $D_G$ is an information divergence; that is $D_G(\hat{x}) = 0$ and $D_G(x) > 0$ for $x \neq \hat{x}$
    \item $D$ is a local time scale Lyapunov function for the adaptive dynamics if $\hat{x}$ is an ESS.
\end{enumerate}
\label{thm_g_ess}
\end{theorem}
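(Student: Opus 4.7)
The plan is to mirror the proof of Theorem \ref{thm_tsei}, with the pair $\bigl((\log G)_{ij},\, G_{ij}\bigr)$ playing the roles that $\bigl(\log_{\escort_i},\, 1/\escort_i\bigr)$ did in the escort case. The theorem splits naturally into the information-divergence claim (1) and the Lyapunov claim (2).

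For (1), the hypothesis that each reciprocal $1/G_{ij}$ is an escort means $G_{ij}$ is nonincreasing and strictly positive on $(0,1]$, so $(\log G)_{ij}(x) = \int_1^x G_{ij}(v)\,dv$ is strictly increasing. A sign analysis of each integrand, exactly as for the escort divergence, makes every $(i,j)$ summand of $D_G$ nonnegative, and strict monotonicity of $(\log G)_{ij}$ then forces $D_G(x) > 0$ for $x \neq \hat{x}$ while $D_G(\hat{x}) = 0$.

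For (2), I would first prove a Bregman-type lemma generalizing the one preceding Theorem \ref{thm_tsei}:
\[ \tsd{D_G}(x) \;\leq\; -\sum_{i,j}(\hat{x}_i - x_i)\,G_{ij}(x)\,\tsd{x_j}, \]
with equality in the continuous-time limit. The two required scalar bounds, $\int_a^b G_{ij}(u)\,du \leq (b-a)\,G_{ij}(a)$ (from $G_{ij}$ nonincreasing) and $\int_a^b (\log G)_{ij}(u)\,du \leq (b-a)\,(\log G)_{ij}(b)$ (from $(\log G)_{ij}$ increasing), are the exact analogs of the two facts used in the escort lemma, and drive the same telescoping computation summand by summand. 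Substituting the adaptive dynamic $\tsd{x_j} = \sum_k C_{jk} f_k$ and using the identity $GC = I - (g^T\mathbf{1})^{-1}\mathbf{1} g^T$, which follows at once from $Gg = \mathbf{1}$, the bilinear form collapses:
\[ \sum_{i,j}(\hat{x}_i - x_i)\,G_{ij}\,\tsd{x_j} \;=\; \sum_{i,k}(\hat{x}_i - x_i)\Bigl(\delta_{ik} - \tfrac{g_k}{g^T\mathbf{1}}\Bigr) f_k \;=\; (\hat{x} - x)\cdot f, \]
since $(\hat{x}-x)\cdot\mathbf{1} = 0$ on the simplex kills the correction term. Therefore $\tsd{D_G}(x) \leq x \cdot f - \hat{x}\cdot f$, which is strictly negative in a neighborhood of $\hat{x}$ by the local ESS condition. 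The technical hypotheses of the time-scale Lyapunov theorems of \cite{Bartosiewicz11} (positive definiteness and decrescence) follow from continuity of $D_G$ and the strict positivity established in (1).

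The main obstacle is the lemma. In the escort case the relevant sum has only one index and $G$ is diagonal, so the Bregman telescoping is transparent; here the double sum over $i,j$ combined with the coupling induced by $C$ demands that the scalar monotonicity inequalities be applied summand by summand, and that the resulting quadratic form be set up with $\tsd{x_j}$ contracted against the \emph{second} index of $G$ so that the identity $GC = I - (g^T\mathbf{1})^{-1}\mathbf{1}g^T$ can do its work. A secondary point is that the definition of $(\log G)_{ij}$ implicitly treats each $G_{ij}$ as a function of a single scalar variable (just as $\escort_i$ effectively depends only on $x_i$ in the escort case); making this scalar reduction precise is what ties the off-diagonal structure of the adaptive vector field to the Bregman structure of $D_G$ and ensures the simplex constraint lands cleanly in the final collapse.
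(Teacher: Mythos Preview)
Your proposal is correct and follows exactly the approach the paper intends: the paper explicitly states that the proof ``is almost identical to the proof of theorem \ref{thm_tsei}, and so is omitted,'' and you have carried out precisely that analogy, including the Bregman-type lemma and the collapse via the simplex constraint. Your identification of the algebraic identity $GC = I - (g^T\mathbf{1})^{-1}\mathbf{1}g^T$ (from $Gg = \mathbf{1}$) and your flagging of the scalar-variable convention for $G_{ij}$ are the right details to make explicit in what the paper leaves implicit.
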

The proof is almost identical to the proof of theorem \ref{thm_tsei}, and so is omitted. Note that $D_G(x) \leq (x - 
\hat{x})^{T} G(x) (x - \hat{x})$. One can now play the same game as before, identifying the incentive in terms of the fitness landscape, and forming best reply, logit, projection, or any other incentive dynamics with respect to particular Riemannian geometries. Indeed, we have that $\incentive_i(x) = \sum_j{G^{-1}_{ij} f_j(x)}$ and the adaptive dynamics can be written as
\begin{equation}
x_i^{\Delta} = \incentive_i(x) - \frac{(G^{-1} \mathbf{1})_i}{\sum_j{(G^{-1} \mathbf{1})_j}} \sum_j{\incentive_j(x)}. 
\label{metric_incentive}
\end{equation}

Call this the \emph{metric-incentive dynamic}. As before, we can identify the idea of an ESS with that of a G-ISS, i.e. a state $\hat{x}$ is a G-ISS if for all x in a neighborhood of $\hat{x}$, the inequality $\incentive(x) \cdot G(x) (\ess{x} - x) > 0$. Then we these definitions, we can restate Theorem \ref{thm_g_ess} (2) as follows.
\begin{theorem}
Let $G$ be a Riemannian metric and assume that the reciprocal of each component $G_{ij}$ is an escort function. Then \item $D_G$ is a local time scale Lyapunov function for the metric-incentive dynamics if $\hat{x}$ is a G-ISS.
\end{theorem}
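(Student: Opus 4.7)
The plan is to transplant the proof of Theorem \ref{thm_tsei} to the matrix setting: scalar quantities $\escort_i(x)$, $\escortlog_i$, and $D_{\escort}$ are replaced by $G_{ij}(x)$, $(\log G)_{ij}$, and $D_G$, and the escort hypothesis is replaced by the assumption that each $1/G_{ij}$ is an escort.

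First I would establish the matrix analog of the auxiliary lemma: for the time scale $h\mathbb{Z}$ with $0 < h \leq 1$,
\[ \tsd{D}_G(x) \leq -\sum_{i,j}(\hat{x}_i - x_i)\, G_{ij}(x)\, \tsd{x}_j, \]
with equality in the limit $h \to 0$. The derivation mirrors the scalar case step by step: writing $h\tsd{D}_G(x) = D_G(x') - D_G(x)$, applying the two monotonicity inequalities $\int_a^b G_{ij}(u)\,du \leq (b-a)\,G_{ij}(a)$ (for $a \leq b$) and the corresponding bound on iterated integrals of $(\log G)_{ij}$, and telescoping the entropy-style cross terms yields the desired bound after dividing through by $h$.

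Next I would substitute the metric-incentive dynamic from Equation \ref{metric_incentive}:
\[ \tsd{D}_G(x) \leq -\sum_{i,j}(\hat{x}_i - x_i)\,G_{ij}(x)\left(\incentive_j(x) - \escortdist_j(x)\sum_k \incentive_k(x)\right). \]
The contribution of the mean term collapses: since $\escortdist_j(x) = (G^{-1}\mathbf{1})_j / \sum_k (G^{-1}\mathbf{1})_k$, the quantity $\sum_j G_{ij}(x)\escortdist_j(x)$ is a constant independent of $i$, so $\sum_{i,j}(\hat{x}_i - x_i) G_{ij}(x) \escortdist_j(x)$ equals that constant times $\sum_i(\hat{x}_i - x_i) = 0$ by the simplex constraint. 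What remains is
\[ \tsd{D}_G(x) \leq -(\hat{x} - x)^{T} G(x)\,\incentive(x), \]
which is strictly negative on a punctured neighborhood of $\hat{x}$ by the G-ISS hypothesis (using symmetry of the metric $G$).

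Finally I would verify the two technical hypotheses of the time scale Lyapunov theorem in a neighborhood of $\hat{x}$. Positive definiteness of $D_G$ is part (1) of Theorem \ref{thm_g_ess}; decrescence follows from the upper bound $D_G(x) \leq (x - \hat{x})^{T} G(x)(x - \hat{x})$ noted just after that theorem, together with local boundedness of the entries of $G$, which sandwiches $D_G(x)$ between two class-K functions of $\|x - \hat{x}\|$. The main obstacle is the first step: carrying the scalar integral manipulations cleanly over to double indices and confirming that the off-diagonal structure of $G_{ij}$ does not introduce cross terms spoiling the bound $(\hat{x}-x)^{T} G(x)\,\tsd{x}$. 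Once that is settled, the cancellation in step two and the routine verification in step three make the proof essentially a one-line substitution, exactly as the author indicates when asserting the proof is ``almost identical'' to Theorem \ref{thm_tsei}.
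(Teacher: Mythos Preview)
Your proposal is correct and follows precisely the route the paper indicates: the paper omits the proof entirely, stating only that it is ``almost identical to the proof of Theorem~\ref{thm_tsei},'' and your three-step outline (matrix analog of the lemma, substitution of the metric-incentive dynamic with cancellation of the normalizing term via $G\,G^{-1}\mathbf{1}=\mathbf{1}$, then invocation of G-ISS and the technical Lyapunov conditions) is exactly that transplant. The obstacle you flag concerning the off-diagonal index bookkeeping in the lemma is real and is the only place where genuine care is required beyond the escort case, but the paper does not address it either.
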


\section{Multiple Populations}

Following equation \ref{metric_incentive}, we can formulate a multiple population dynamic in which each population operates on its own incentive, time-scale, and geometry. Let $\hat{G}(x) = \frac{(G^{-1} \mathbf{1})}{\sum_j{(G^{-1} \mathbf{1})_j}}$ be the vector of coefficients in equation \ref{metric_incentive}. Then the multiple population time-scale metric incentive dynamic is (populations indexed by $\alpha$):

\begin{equation}
x^{\Delta_{\alpha}} = \incentive_{i, \alpha}(x_{\alpha}) - \hat{G}_{i, \alpha}(x_{\alpha}) \sum_j{\incentive_{j, \alpha}(x)}
\label{multipop_metric_incentive}
\end{equation}

\subsection{Examples}

We give two examples before discussing stability in Figures \ref{fig_8} and \ref{fig_9}. The only difference between the two examples is the incentive for the second population: in the first case, the incentive is logit, and in the second, q-replicator with $q=2$. Nevertheless, the resulting dynamics are very different.

\begin{figure}[h]
        \begin{subfigure}[b]{0.6\textwidth}
            \centering
            \includegraphics[width=\textwidth]{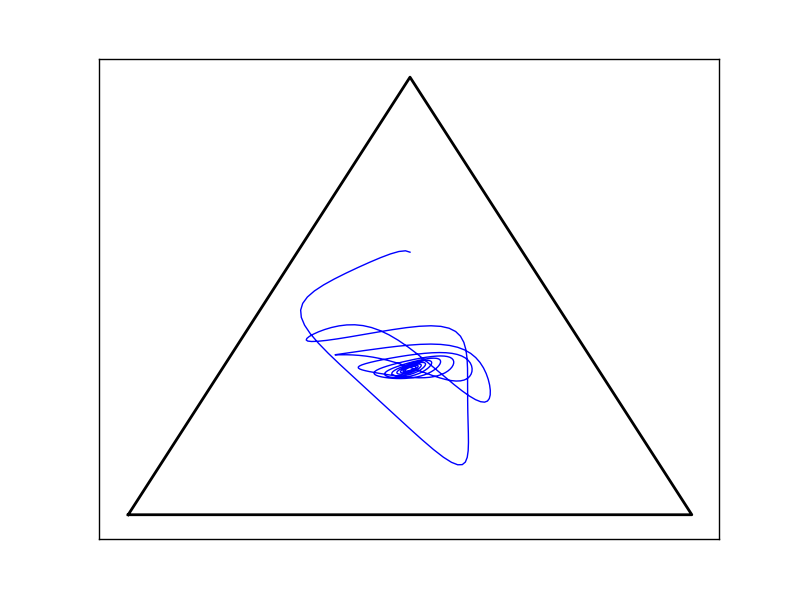}
            \label{fig_8_1}
        \end{subfigure}%
        ~ 
        \begin{subfigure}[b]{0.6\textwidth}
            \centering
            \includegraphics[width=\textwidth]{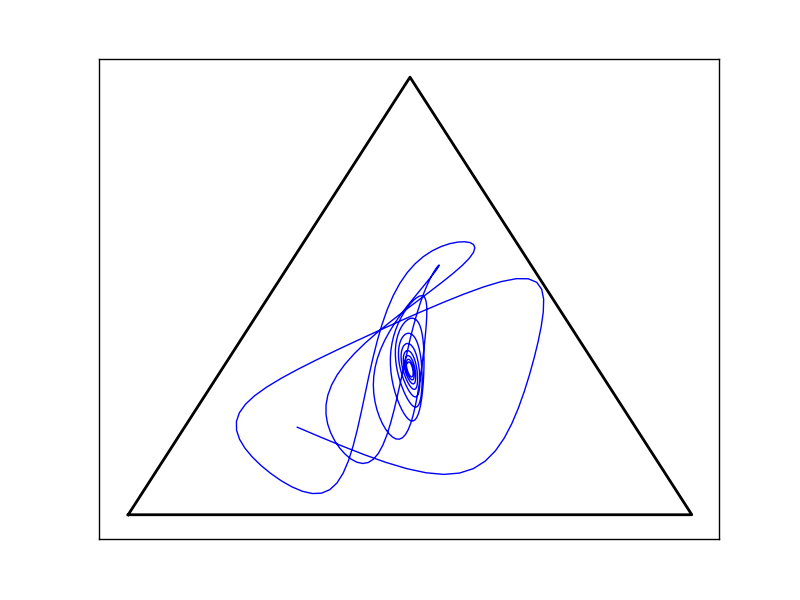}
            \label{fig_8_2}
        \end{subfigure}
        \caption{Two population dynamic with $h=1/10$ for both populations, with fitness landscape given by $a=-1$, $b=-2$. Both populations converge to the center (500 iterations shown). Left: Replicator incentive, Shahshahani geometry, initial point $(1/5,1/5,3/5)$; Right: Logit incentive, $\eta=0.4$, Euclidean geometry, initial point $(3/5,1/5,1/5)$}
        \label{fig_8}
\end{figure} 

\begin{figure}[h]
        \begin{subfigure}[b]{0.6\textwidth}
            \centering
            \includegraphics[width=\textwidth]{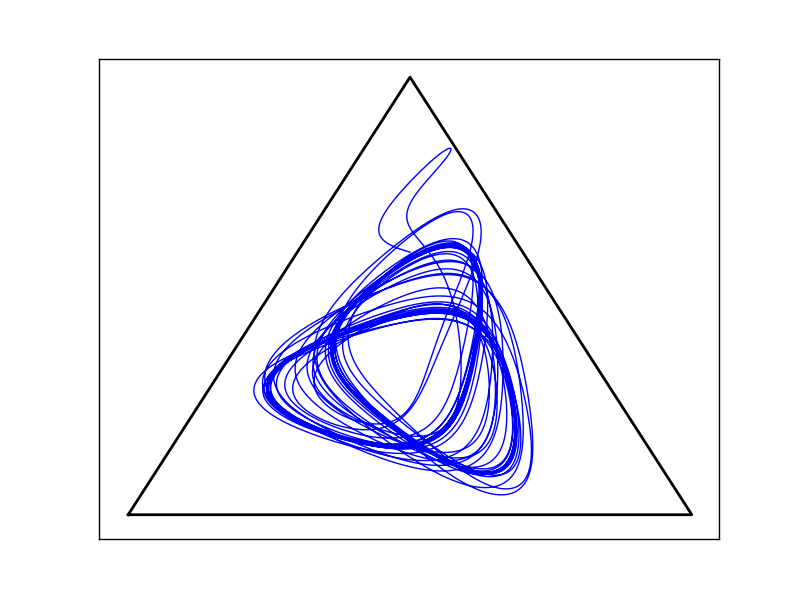}
            \label{fig_9_1}
        \end{subfigure}%
        ~ 
        \begin{subfigure}[b]{0.6\textwidth}
            \centering
            \includegraphics[width=\textwidth]{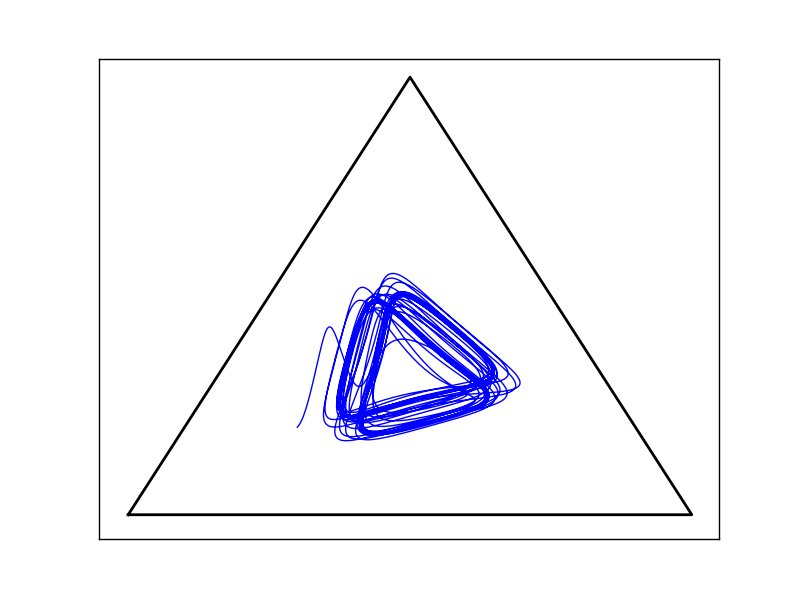}
            \label{fig_9_2}
        \end{subfigure}
        \caption{Two population dynamic with $h=1/10$ for both populations, with fitness landscape given by $a=-1$, $b=-2$. Neither population converges after 10,000 iterations. Left: Replicator incentive, Shahshahani geometry, initial point $(1/5,1/5,3/5)$; Right: q-Replicator incentive with $q=2$, Euclidean geometry, initial point $(3/5,1/5,1/5)$}
        \label{fig_9}
\end{figure}

In the spirit of \cite{Fryer2012} and \cite{cressman1997local}, if there is a multiple-population G-ISS where the time-scales do not differ for each population then we can find a time-scale Lyapunov function for the system by summing $D_G$ for each population. The proof is again analogous to \ref{thm_tsei}, and can easily be verified with differentiation for the continuous case.

\begin{theorem}
Suppose each population in system $\ref{multipop_metric_incentive}$ is of the same time scale ($\mathbb{T} = h \mathbb{Z}$ or $\mathbb{T} = \mathbb{R}$). Let $L = \sum{D_{G_{\alpha}}(\hat{x}_{\alpha} || x_{\alpha})}$. $L$ is a local time-scale Lyapunov function for the system \ref{multipop_metric_incentive} iff
\[ \sum_{\alpha}{\incentive_{\alpha}(x_{\alpha}) \cdot G_{\alpha}(x_{\alpha}) (\hat{x}_{\alpha} - x_{\alpha})} > 0 \]
for some neighborhood of $\hat{x}_{\alpha}$.
\end{theorem}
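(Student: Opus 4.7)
The plan is to imitate the proof of Theorem \ref{thm_tsei} population by population and then sum over populations, exploiting the fact that $\tsd{L}$ decomposes additively because all populations share a single time scale. First I would verify that $L$ itself is positive definite and decrescent as a function on the product simplex: since each summand $D_{G_\alpha}(\ess{x}_\alpha, x_\alpha)$ is an information divergence by Theorem \ref{thm_g_ess}(1), the sum vanishes only at $(\ess{x}_\alpha)_\alpha$ and is controlled above and below by the product-metric quadratic forms, which gives the class-K bounds required by the Bartosiewicz framework cited after the definition of $K_h$.

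Next I would compute $\tsd{L}$. Because the time scales coincide, the delta operator passes through the finite sum, so
\[
\tsd{L} = \sum_\alpha \tsd{D_{G_\alpha}}(x_\alpha).
\]
Applying the analogue of the Lemma used in Theorem \ref{thm_tsei} (which generalizes verbatim to a Riemannian metric whose diagonal-of-inverse entries are escort functions, exactly as noted just after the statement of Theorem \ref{thm_g_ess}) gives for each $\alpha$
\[
\tsd{D_{G_\alpha}}(x_\alpha) \;\leq\; -\sum_i \bigl(\ess{x}_{i,\alpha} - x_{i,\alpha}\bigr)\, G_\alpha(x_\alpha)_{i\cdot}\, \tsd{x_{\alpha}},
\]
with equality when $\mathbb{T} = \mathbb{R}$. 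I would then substitute the multipopulation dynamic \eqref{multipop_metric_incentive} for $\tsd{x_\alpha}$. Observing that the coefficient $\hat{G}_{i,\alpha}(x_\alpha)$ is precisely the one that kills the component of the incentive along the gradient of the simplex constraint when contracted against $G_\alpha(x_\alpha)(\ess{x}_\alpha - x_\alpha)$ (since $\sum_i (\ess{x}_{i,\alpha} - x_{i,\alpha}) = 0$), the projection term drops out, leaving
\[
\tsd{L} \;\leq\; -\sum_\alpha \incentive_\alpha(x_\alpha)\cdot G_\alpha(x_\alpha)\bigl(\ess{x}_\alpha - x_\alpha\bigr).
\]

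Finally, the hypothesized inequality is exactly the statement that the right-hand side is strictly negative on a neighborhood of $\ess{x}$, which combined with the positive definiteness of $L$ yields the time scale Lyapunov property via Theorems 5.1 and 5.4 of \cite{Bartosiewicz11}. For the converse direction, if $L$ is a time scale Lyapunov function then $\tsd{L} \leq 0$ on a neighborhood; reading the display above in the $h \to 0$ equality case, or in the $h > 0$ case by noting that the chain of inequalities in the single-population lemma is tight to leading order in $h$, forces the summed inner product to be nonnegative, and strict negativity away from $\ess{x}$ gives the strict inequality as stated.

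The main obstacle I anticipate is not the algebra but the cross-population bookkeeping: one has to be careful that the projection coefficient $\hat{G}_{i,\alpha}$ is truly annihilated by $G_\alpha(\ess{x}_\alpha - x_\alpha)$ in each summand separately (so that no cross-terms between populations $\alpha \neq \beta$ appear), and that the technical conditions for the time scale Lyapunov theorems remain valid once $L$ is built as a finite sum on the product of simplices. The equality case $h = 0$ is routine differentiation, and the $h > 0$ case is handled exactly as in the proof of Theorem \ref{thm_tsei}, with the two escort inequalities recorded there applied coordinatewise inside each population.
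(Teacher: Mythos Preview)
Your proposal is correct and follows precisely the route the paper indicates: the paper does not spell out a proof but simply remarks that it ``is again analogous to \ref{thm_tsei}, and can easily be verified with differentiation for the continuous case,'' and what you have written is exactly that analogy carried out in detail, population by population, with the additive decomposition of $\tsd{L}$ enabled by the common time scale. Your verification that the projection term $\hat{G}_{i,\alpha}$ is annihilated by $G_\alpha(\ess{x}_\alpha - x_\alpha)$ via $(\ess{x}_\alpha - x_\alpha)^T G_\alpha G_\alpha^{-1}\mathbf{1} = (\ess{x}_\alpha - x_\alpha)^T\mathbf{1} = 0$ is the key algebraic step, and it is indeed purely within each population with no cross terms, so the obstacle you flag does not materialize.
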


\subsection{Variable Time-Scales}

A suitable generalization of the time-scale Lyapunov theorem that would apply to a system on multiple time-scales does not seem to exist in the literature. Rather than prove such a result in this paper, we will just give an example relevant to the time-scales under discussion, and conjecture further results.

First consider two populations, on time scales $\mathbb{T}_1 = h \mathbb{Z}$ and $\mathbb{T}_2 = h / 2 \mathbb{Z}$ respectively. We must decide how to compute the derivative of the (candidate) Lyapunov function. In this simple case, simply taking the intersection of the time scales (resulting in just the the time scale $\mathbb{T}_1$ and computing the derivative at these points produces a positive definite and decreasing quantity, albeit on a subsequence of the original dynamical system. See Figure \ref{fig_10} for an example. The sum of the appropriate divergences is monotonically decreasing to zero as the populations converge. In this example, if $h=1/10$ for both populations, the Lyapunov quantity is not monotonic (it has one local maximum) globally, just locally.

\begin{figure}[h]
        \begin{subfigure}[b]{0.5\textwidth}
            \centering
            \includegraphics[width=\textwidth]{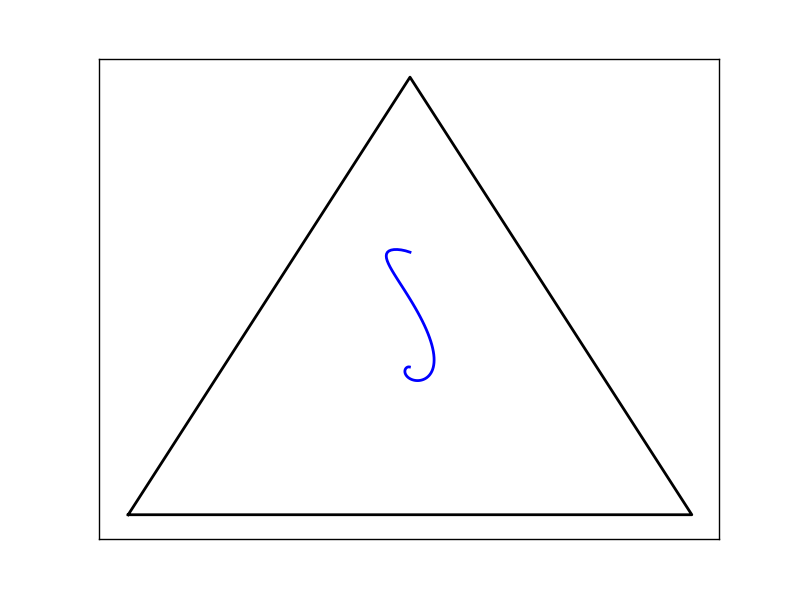}
            \caption{}
            \label{fig_10_1}
        \end{subfigure}%
        ~ 
        \begin{subfigure}[b]{0.5\textwidth}
            \centering
            \includegraphics[width=\textwidth]{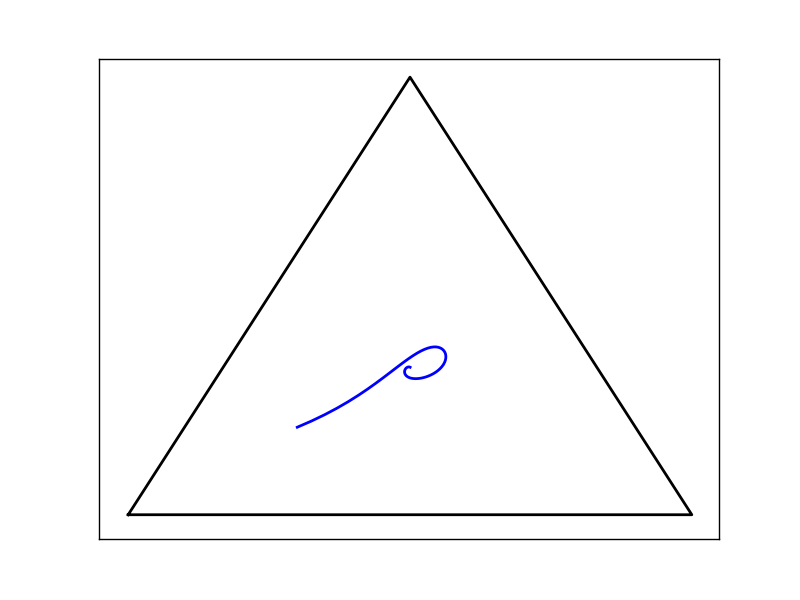}
            \caption{}
            \label{fig_10_2}
        \end{subfigure}\\
        \begin{subfigure}[b]{0.5\textwidth}
            \centering
            \includegraphics[width=\textwidth]{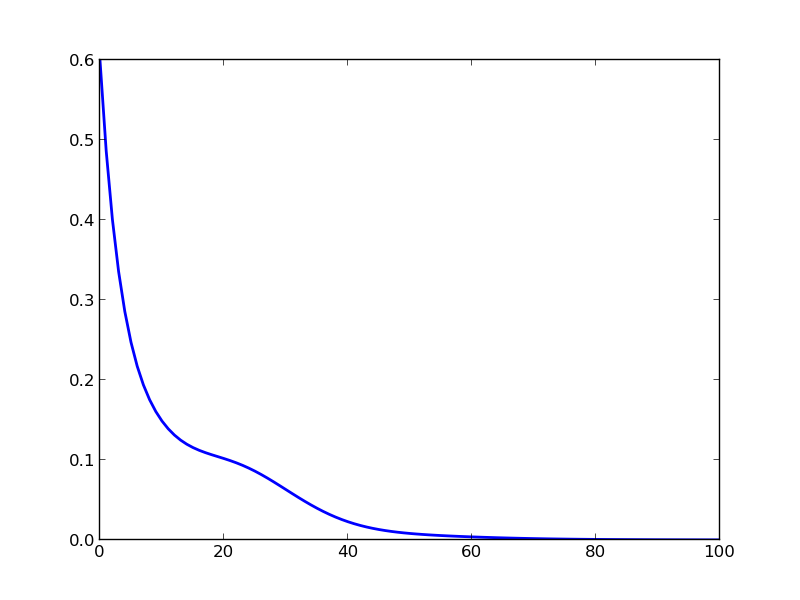}
            \caption{}
            \label{fig_10_3}
        \end{subfigure}
        \caption{Two population dynamic with $h=1/10$ for first population, $h=1/20$ for the second. Left (a): $a=-1$, $b=-2$, replicator incentive, Shahshahani geometry, initial point $(1/5,1/5,3/5)$; Right (b): $a=-1$, $b=-4$, replicator incentive, $q=2$ escort geometry, initial point $(3/5,1/5,1/5)$; Bottom (c): KL-divergence from center of first population plus $q=2$ divergence from the center of second population.}
        \label{fig_10}
\end{figure} 

In more general cases, depending on the time-scales involved, it may not be so obvious to determine what the minimal or intersection time-scale should be (since it could be the case that the intersection is empty, especially if there is an interaction between time scales of the form $\mathbb{T} = h \mathbb{Z}$ and say quantum time-scales of the form $\mathbb{T} = q^{\mathbb{Z}}$). Similarly, it may be the case that some type of dot product of time-scale derivatives with the respective Lyapunov functions for each population is the appropriate operation, but this is beyond the scope of this paper, and will undoubtedly be discussed in future work.

\section{Mutation}

The replicator-mutator equation is an evolutionary dynamic that takes the following form \cite{hofbauer1985selection} \cite{Page02}:
\[\dot{x}_i = \sum_{j}{ x_j \mu_{ji} f_j(x)} - x_i \bar{f}(x),\]
where $\mu_{ji}$ is the mutation transition probability of type $j$ mutating into type $i$ at reproduction. Using the relationship $\incentive_i(x) = x_i f_i$, we can translate this equation to incorporate incentives:
\[ \dot{x}_i = \sum_{j}{ \incentive_j(x) \mu_{ji}} - x_i \sum_{j}{\incentive_j(x)}\]

In vector form, we have that 
\[ \dot{x} = \incentive(x)^T \mu - x \sum_{j}{\incentive_j(x)} = \incentive(x)^T \mu - x (\incentive(x)^T \mathbf{1}) \]

Finally, we can incorporate time scales and geometric structure into a dynamic we call the \emph{time-scale metric incentive-mutator dynamic}:
\[ \tsd{x} = \incentive(x)^T \mu - \hat{G}(x) (\incentive(x)^T \mathbf{1})\]

A common form of the discrete replicator mutator equation \cite{Page02} can be obtained using $h=1$ and the incentive 
\[ \incentive_j(x) = \frac{x_j(f_j(x) - \bar{f}(x))}{\bar{f}(x)} \]

This gives:
\[ x_i' = \sum_{j}{\frac{x_j f_j(x) \mu_{ji}}{\bar{f}(x)}} \]

A common choice for mutation matrix is $\mu_{\epsilon} = (1-\epsilon) I_n - \epsilon / (n-1) (1_n - I_n)$, i.e. from the identity matrix, subtract $\epsilon$ from the diagonal and split uniformally over the other $n-1$ elements of the row. We end with a final example of a best reply dynamic with mutation in Figure \ref{fig_11}. See \cite{sandholm2011population} for another approach to a best reply dynamic with mutation.

\begin{figure}[h]
        \centering
        \includegraphics[scale=0.5]{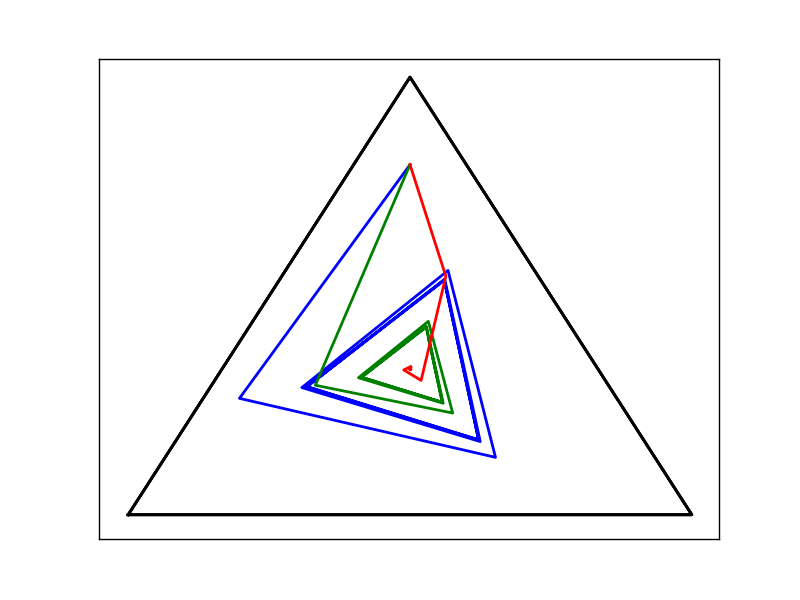}
        \caption{Best reply dynamic with uniform mutation matrix, starting at $(1/10, 1/10, 8/10)$ with fitness given by the RSP matrix with $a=1$ and $b=2$. Blue: $\epsilon=0.1$, Green: $\epsilon=0.4$, Red: $\epsilon=0.8$. The addition of mutation changes the size of the Shapley triangle depending on the value of $\epsilon$.}
        \label{fig_11}
\end{figure} 

\section{Discussion}
In this paper we have shown that a vast array of evolutionary dynamics can be analyzed in terms of incentives, Riemannian metrics, and divergence functions, the last of which allows the stability analysis of many discrete dynamics associated to the same geometric and game-theoretic constructions. In particular, we have shown that choice of incentive and Riemannian metric can affect the presence of equilibria, and that decomposition of dynamics into incentives, escorts, and Riemannian metrics can lead directly to Lyapunov functions for particular dynamics. In the process, we introduced a new information divergence defined in terms of a Riemannian metric rather than the typical dynamics formed only from metrics with diagonal matrix representations.

In our approach to evolutionary dynamics we focused attention on a few special cases that yield a particularly nice set of examples. It is possible to define these dynamics on arbitrary time scales and achieve some analogous results. It may be possible to formulate multipopulation dynamics where each population evolves according to a different time scale, which would require a substantial expansion of the current state of stability theorems on time scales. We have also shown that the time scale calculus and its stability theory can be a useful and unifying addition to the study of evolutionary dynamics.

\section{Acknowledgements}
All plots in this paper we created with \emph{matplotlib} and python code available at \url{https://github.com/marcharper/python-ternary} and \url{https://github.com/marcharper/metric-incentive-dynamics}. Marc Harper acknowledges support from the Office of Science (BER), U.S. Department of Energy, Cooperative Agreement No. DE-FC02-02ER63421.

\bibliography{ref}
\bibliographystyle{plain}

\end{document}